\documentclass[a4paper,10pt]{amsart}

\usepackage[utf8]{inputenc} 
\usepackage{amsmath}
\usepackage{amsbsy}
\usepackage{amssymb}
\usepackage{amscd,amsthm}
\usepackage{verbatim}
\usepackage[english]{babel}
\usepackage{dsfont}
\usepackage{anysize}
\usepackage{hyperref}

\newcommand{\e}{\varepsilon}

\newcommand{\N}{\mathds{N}}
\newcommand{\R}{\mathds{R}}

\newcommand{\p}{\phantom}
\newcommand{\q}{\quad}

\newtheorem{thm}{Theorem}[section]
\newtheorem{lem}[thm]{Lemma}
\newtheorem{kor}[thm]{Corollary}
\newtheorem{conj}[thm]{Conjecture}
\newtheorem{prop}[thm]{Proposition}
\newtheorem*{bp}{Bertrand's Postulate}

\theoremstyle{definition}

\theoremstyle{remark}
\newtheorem*{rema}{Remark}

\title{On the number of primes up to the $n$th Ramanujan prime}
\author{Christian Axler}
\address{Institute of Mathematics\\ Heinrich-Heine University Düsseldorf\\ 40225 Düsseldorf, Germany}
\email{christian.axler@hhu.de}
\date{\today}
\subjclass[2010]{Primary 11N05; Secondary 11A41, 11B05}
\keywords{Bertrand's Postulate, distribution of prime numbers, Ramanujan primes}

\begin{document}

\begin{abstract}
The $n$th Ramanujan prime is the smallest positive integer $R_n$ such that for all $x \geq R_n$ the interval $(x/2, x]$ contains at least $n$ primes. In this 
paper we undertake a study of the sequence $(\pi(R_n))_{n \in \mathds{N}}$, which tells us where the $n$th Ramanujan prime appears in the sequence of all 
primes. In the 
first part we establish new explicit upper and lower bounds for the number of primes up to the $n$th Ramanujan prime, which imply an asymptotic formula for 
$\pi(R_n)$ conjectured by Yang and Togb\'e. In the second part of this paper, we use these explicit estimates to derive a result concerning an inequality 
involving $\pi(R_n)$ conjectured by of Sondow, Nicholson and Noe.
\end{abstract}

\maketitle

\section{Introduction}

Let $\pi(x)$ denotes the number of primes not exceeding $x$.
In 1896, Hadamard \cite{hadamard1896} and de la Vall\'{e}e-Poussin \cite{vallee1896} proved, independently, the asymptotic formula $\pi(x) \sim x/\log x$ as $x 
\to \infty$, which is known as the \textit{Prime Number Theorem}. Here, $\log x$ is the natural logarithm of $x$. In his later paper \cite{vallee1899}, where he 
proved the existence of a zero-free region for the Riemann zeta-function $\zeta(s)$ to the left of the line $\text{Re}(s) = 1$, de la Vall\'{e}e-Poussin also 
estimated the error term in the Prime Number Theorem by showing
\begin{equation}
\pi(x) = \frac{x}{\log x} + O \left( \frac{x}{\log^2x} \right). \tag{1.1} \label{1.1}
\end{equation} 
The prime counting function and the asymptotic formula \eqref{1.1} play an important role in the definition of Ramanujan primes, which have their origin in 
Bertrand's postulate.

\begin{bp}
For each $n \in \N$ there is a prime number $p$ with $n < p \leq 2n$.
\end{bp}

In terms of the prime counting function, Bertrand’s postulate states that $\pi(2n) - \pi(n) \geq 1$ for every $n \in \N$. Bertrand’s postulate was first proved 
by Chebyshev \cite{chebyshev} in 1850. In 1919, Ramanujan \cite{ramanujan} proved an extension of Bertrand’s postulate by investigating inequalities of the form 
$\pi(x) - \pi(x/2) \geq n$ for $n \in \N$. In particular, he found that
\begin{displaymath}
\pi(x) - \pi \left( \frac{x}{2} \right) \geq 1 \q (\text{respectively 2, 3, 4, 5, \ldots})
\end{displaymath}
for every
\begin{equation}
x \geq 2 \q (\text{respectively 11, 17, 29, 41, \ldots}). \tag{1.2} \label{1.2}
\end{equation}
Using the fact that $\pi(x) - \pi(x/2) \to \infty$ as $x \to \infty$, which follows from \eqref{1.1}, Sondow \cite{sondow} introduced the notation $R_n$ to 
represent the smallest positive integer for which the inequality $\pi(x) - \pi(x/2) \geq n$ holds for every $x \geq R_n$ . In \eqref{1.2}, Ramanujan calculated 
the numbers $R_1 = 2$, $R_2 = 11$, $R_3 = 17$, $R_4 = 29$, and $R_5 = 41$. All these numbers are prime, and it can easily be shown that $R_n$ is actually prime 
for every $n \in \N$. In honor of Ramanujan’s proof, Sondow \cite{sondow} called the number $R_n$ the $n$th Ramanujan prime. A legitimate question is, where 
the $n$th Ramanujan prime appears in the sequence of all primes. Letting $p_k$ denotes the $k$th prime number, we have $R_n = p_ {\pi(R_n)}$, and it seems 
natural to study the sequence $(\pi(R_n))_{n \in \N}$. The first few 
values of $\pi(R_n)$ for $n=1,2,3, \ldots$ are
\begin{displaymath}
\pi(R_n) = 1,5,7,10,13,15,17,19,20,25,26,28,31,35,36,39,41,42,49,50,51,52,53, \ldots.
\end{displaymath}
For further values of $\pi(R_n)$, see \cite{sloane}. Since both $R_n$ for large $n$ and $\pi(x)$ for large $x$ are hard to compute, we are interested in 
explicit upper and lower bounds for $\pi(R_n)$.
Sondow \cite[Theorem 2]{sondow} found a first lower bound for $\pi(R_n)$ by showing that the inequality
\begin{equation}
\pi(R_n) > 2n \tag{1.3} \label{1.3}
\end{equation}
holds for every positive integer $n \geq 2$. Combined with \cite[Theorem 3]{sondow} and the Prime Number Theorem, we get the asymptotic relation
\begin{equation}
\pi(R_n) \sim 2n \q\q (n \to \infty). \tag{1.4} \label{1.4}
\end{equation}
This, together with \eqref{1.3}, means, roughly speaking, that the probability of a randomly chosen prime being a Ramanujan prime is slight less than 
$1/2$. The first upper bound for $\pi(R_n)$ is also due to Sondow \cite[Theorem 2]{sondow}. He found that the upper bound $\pi(R_n) < 4n$ holds for every 
positive integer $n$, and conjectured \cite[Conjecture 1]{sondow} that the inequality $\pi(R_n) < 3n$ holds for every positive integer $n$. This conjecture was 
proved by Laishram \cite[Theorem 2]{laishram} in 2010. Applying Theorem 4 from the paper of Sondow, Nicholson and Noe \cite{sondownicholsonnoe}, we get a 
refined upper bound for the number of primes less or equal to $\pi(R_n)$, namely that the inequality $\pi(R_n) \leq \pi(41p_{3n}/47)$ holds for every positive 
integer $n$ with equality at $n = 5$. Srinivasan \cite[Theorem 1.1]{srinivasan} proved that for every $\e > 0$ there exists a positive integer $N = N(\e)$ such 
that
\begin{equation}
\pi(R_n) < \lfloor 2n(1+\e) \rfloor \tag{1.5} \label{1.5}
\end{equation}
for every positive integer $n \geq N$ and conclude \cite[Corollary 2.1]{srinivasan} that $\pi(R_n) \leq 2.6n$ for every positive integer $n$. The present 
author \cite[Theorem 3.22]{axler2016} showed independently that for each $\e  > 0$ there is a computable positive integer $N = N(\e)$ so that $\pi(R_n) \leq 
\lceil 2n(1+\e) \rceil$ for every positive integer $n \geq N$ and conclude that
\begin{equation}
\pi(R_n) \leq \lceil tn \rceil \tag{1.6} \label{1.6}
\end{equation}
for every positive integer $n$, where $t$ is a arbitrary real number satisfying $t > 48/19$. The inequality \eqref{1.5} was improved by Srinivasan and 
Nicholson \cite[Theorem 1]{srinivasannicholson}. They proved that
\begin{displaymath}
\pi(R_n) \leq 2n \left( 1 + \frac{3}{\log n + \log \log n - 4} \right)
\end{displaymath}
for every positive integer $n \geq 242$. Later, Srinivasan and Ar\'es \cite[Theorem 1.1]{srinivasanares} found a more precise result by showing that for every 
$\e > 0$ there exists a positive integer $N = N(\e)$ such that
\begin{equation}
\pi(R_n) < 2n \left( 1 + \frac{\log 2 + \e}{\log n + j(n)} \right) \tag{1.7} \label{1.7}
\end{equation}
for every positive integer $n \geq N$, where $j$ is any positive function satisfying $j(n) \to \infty$ and $nj'(n) \to 0$ as $n \to \infty$. Setting $\e = 
0.5$ and $j(n) = \log \log n - \log 2 - 0.5$, they found \cite[Corollary]{srinivasanares} that the inequality \eqref{1.7} holds for every positive integer $n 
\geq 44$. In 2016, Yang and Togb\'e \cite[Theorem 1.2]{yangtogbe} established the following current best upper and lower bound for $\pi(R_n)$ when $n$ satisfies 
$n > 10^{300}$.

\begin{prop}[Yang, Togb\'e] \label{prop101}
Let $n$ be a positive integer with $n > 10^{300}$. Then
\begin{displaymath}
\beta < \pi(R_n) < \alpha,
\end{displaymath}
where
\begin{align*}
\alpha & = 2n \left( 1 + \frac{\log 2}{\log n} - \frac{\log 2 \log \log n - \log^22 - \log 2 - 0.13}{\log^2n}\right), \\
\beta & = 2n \left( 1 + \frac{\log 2}{\log n} - \frac{\log 2 \log \log n - \log^22 - \log 2 + 0.11}{\log^2n}\right).
\end{align*}
\end{prop}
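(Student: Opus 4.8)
The plan is to combine the defining property of the Ramanujan primes with sufficiently precise explicit estimates for $\pi(x)$ and for the $m$th prime $p_m$. The starting point is the identity
\begin{equation*}
\pi(R_n) - \pi(R_n/2) = n,
\end{equation*}
which holds for every $n \ge 2$ and follows readily from the definition of $R_n$ together with the fact that $R_n$ is prime: minimality forces $\pi(R_n-1) - \pi((R_n-1)/2) \le n-1$, while the step function $x \mapsto \pi(x) - \pi(x/2)$ increases by exactly $1$ at the odd prime $x = R_n$. Writing $m := \pi(R_n)$, so that $R_n = p_m$, the identity becomes $\pi(p_m/2) = m - n$, and the whole task is to squeeze out of this single equation the two bounds $\beta < m < \alpha$.

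For the upper bound $m < \alpha$, I would feed into the equation an explicit upper bound $\pi(t) \le u(t)$ of Cipolla shape, $u(t) = \frac{t}{\log t}\bigl(1 + \frac{1}{\log t} + \frac{2}{\log^2 t} + \cdots\bigr)$ valid beyond an effective threshold, together with an explicit upper bound $p_m \le P_+(m)$ for the $m$th prime; since $\pi(p_m/2) \le u(p_m/2) \le u(P_+(m)/2)$, this yields
\begin{equation*}
n \ \ge\ m - u(P_+(m)/2) \ =:\ A(m).
\end{equation*}
The function $A$ is strictly increasing on the relevant range, with $A(m) \sim m/2$, so it suffices to verify the explicit one-variable inequality $A(\alpha) > n$ for every $n > 10^{300}$: were $m \ge \alpha$, we would get $n \ge A(m) \ge A(\alpha) > n$, a contradiction. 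Once one substitutes the formula for $\alpha$ and expands everything in powers of $1/\log n$, the main contributions cancel and the inequality comes down to a comparison at relative order $1/\log^2 n$, in which the constant $0.13$ occurring in the last term of $\alpha$ has been chosen with exactly enough slack to absorb the remaining, explicitly bounded error terms of smaller order once $n > 10^{300}$.

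For the lower bound $m > \beta$ I would argue symmetrically with an explicit lower bound $\pi(t) \ge \ell(t)$ and an explicit lower bound $p_m \ge P_-(m)$: from $\pi(p_m/2) \ge \ell(p_m/2) \ge \ell(P_-(m)/2)$ one obtains $n \le m - \ell(P_-(m)/2) =: B(m)$, and since $B$ is increasing it is enough to check $B(\beta) < n$ for all $n > 10^{300}$, which again reduces to an expansion in $1/\log n$ with the constant $0.11$ in $\beta$ playing the analogous role. Alternatively one can sidestep the $p_m$-estimates by working with $y := R_n/2$, where $\pi(2y) - \pi(y) = n$ and $\pi(2y) = m$: bounding the function $y \mapsto \pi(2y) - \pi(y)$ pins $y$ down in terms of $n$, and $\pi(2y)$ then pins down $m$, with the same analytic comparisons reappearing.

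The genuine difficulty lies entirely in these explicit computations. One must carry the error terms of the $\pi(t)$- and $p_m$-estimates through the halving and through the approximate inversion relating $m$ and $n$, retaining enough precision to distinguish the constants $0.13$ and $0.11$ — note that the whole gap $\alpha - \beta$ equals only $0.48\,n/\log^2 n$ — and one must make the numerical threshold large enough that every discarded contribution (of relative size $\log\log n / \log n$, or a moderate constant times a fixed negative power of $\log n$) is provably dominated; the choice $n > 10^{300}$ is precisely what makes this go through. The subordinate points — monotonicity of $A$, $B$ (and of $x \mapsto \ell(x) - u(x/2)$, in the real-variable formulation) on the ranges in play, and the fact that $p_m/2$ lies in the domain of validity of the chosen $\pi$-bounds — are routine once $n$ is this large.
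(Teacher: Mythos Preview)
Your outline is correct and close in spirit to what Yang and Togb\'e actually do, though the packaging differs. They (and the present paper, in its proofs of Theorems \ref{thm102} and \ref{thm103}) do not use $\pi$-estimates at all: the bridge from the definition of $R_n$ to the analytic side is Srinivasan's lemma $2p_{s-n} < p_s$ for the upper bound (Lemma \ref{lem301}) and its companion $p_s < 2p_{s-n+1}$ for the lower bound (Lemma \ref{lem401}), both of which are immediate consequences of your exact identity $\pi(p_m/2) = m-n$. These two inequalities are then fed only with explicit upper and lower bounds for $p_k$ (Dusart's, in Yang--Togb\'e's case). Concretely one sets $F_1(x) = G(x) - 2H(x-n)$, where $G$ and $H$ are the upper- and lower-bound functions for $p_k$, checks that $F_1$ is strictly decreasing on $(2n, 2.6n)$, that $F_1(s) > 0$ by Srinivasan's lemma, and that $F_1(\alpha) < 0$ by an explicit expansion in powers of $1/\log n$; this forces $s < \alpha$. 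Your function $A(m) = m - u(P_+(m)/2)$ and the verification $A(\alpha) > n$ unwind into essentially the same calculation once one observes that an explicit upper bound for $\pi$ is, after inversion, an explicit lower bound for $p_k$. The practical advantage of the Yang--Togb\'e formulation is that everything lives on the $p_k$ side, so only one matched pair of estimates has to be carried through and there is no compatibility issue between separately chosen $\pi$- and $p_k$-bounds; your version is marginally more flexible in that those two inputs can be selected independently.
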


The proof of Proposition \ref{prop101} is based on explicit estimates for the $k$th prime number $p_k$ obtained by Dusart \cite[Proposition 6.6 and Proposition 
6.7]{dusart2010} and on Srinivasan's lemma \cite[Lemma 2.1]{srinivasan} concerning Ramanujan primes. Instead of using Dusart's estimates, we use the estimates 
obtained in \cite[Corollary 1.2 and Corollary 1.4]{axler2017} to get the following improved upper bound for $\pi(R_n)$.

\begin{thm} \label{thm102}
Let $n$ be a positive integer satisfying $n \geq 5\,225$ and let
\begin{equation}
U(x) = \frac{\log 2 \log x(\log \log x)^2 - c_1 \log x \log \log x + c_2 \log x - \log^22 \log \log x + \log^32 + \log^22}{\log^4x + 
\log^3x \log \log x - \log^3x \log 2 - \log^2x \log 2}, \tag{1.8} \label{1.8}
\end{equation}
where $c_1 = 2 \log^22 + \log 2$ and $c_2 = \log^32 + 2\log^22 + 0.565$. Then
\begin{displaymath}
\pi(R_n) < 2n \left( 1 + \frac{\log 2}{\log n} - \frac{\log 2 \log \log n - \log^22 - \log 2}{\log^2n} + U(n) \right).
\end{displaymath}
\end{thm}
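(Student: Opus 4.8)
The plan is to combine two ingredients: Srinivasan's lemma bounding $R_n$ from above in terms of a prime $p_k$ with $k$ close to $3n$, and the explicit upper bound for $p_k$ from \cite[Corollary 1.4]{axler2017}. Srinivasan's lemma \cite[Lemma 2.1]{srinivasan} states roughly that if $p_k$ is a prime with $k \geq 2n$ and the interval $(p_k/2, p_k]$ contains at least $n$ primes, then $R_n \leq p_k$; equivalently $\pi(R_n) \leq k$. So the strategy is to pick a suitable index $k = k(n)$, show $\pi(p_k) - \pi(p_k/2) \geq n$ for all $n \geq 5\,225$ by using explicit lower bounds for $\pi(p_k)$ and upper bounds for $\pi(p_k/2)$ (or, what Srinivasan does, a lower bound on $p_k - $ something), and then translate $\pi(R_n) \leq k$ into the stated inequality by choosing $k$ to be essentially $2n$ times the bracketed expression, rounding up to an integer, and checking the rounding is absorbed.

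First I would recall precisely the form of Srinivasan's lemma and of \cite[Corollary 1.2 and Corollary 1.4]{axler2017}: Corollary 1.4 presumably gives $p_k < k(\log k + \log\log k - 1 + (\log\log k - 2)/\log k - (\log\log k)^2/(2\log^2 k) + \cdots)$ type expansion with an explicit error, valid for $k$ above some threshold; Corollary 1.2 gives the matching lower bound for $\pi(x)$. Then I would set up the inequality we need: with $N = \pi(R_n)$, Srinivasan's argument shows that it suffices to find the least $k$ such that $p_k \geq 2 \cdot p_{k-n}$ roughly fails, i.e. the condition forcing $R_n \le p_k$. Concretely, one shows $R_n \leq p_k$ whenever $p_{k} \geq 2 p_{k-n+1} $ is replaced by the correct chain; Srinivasan shows $\pi(R_n) \le k$ provided $p_{2n+1}, \ldots$ — I would follow her reduction to the inequality $p_k/2 \ge p_{k-n}$, i.e. $p_k \ge 2 p_{k-n}$. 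Plugging the upper bound for $p_k$ (small index correction) is the wrong direction, so instead one needs an \emph{upper} bound for $2p_{k-n}$ and a \emph{lower} bound for $p_k$; the latter comes from \cite[Corollary 1.2]{axler2017} (lower bound for $\pi$ gives lower bound for $p_k$ via inversion, or there is a direct lower bound for $p_k$ in that paper).

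The main technical work — and the main obstacle — is the asymptotic bookkeeping: one must expand $p_k$ and $p_{k-n}$ to enough terms (through order $1/\log^3 k$, since the claimed correction $U(n)$ has denominator $\log^4 n$) and verify that the difference $p_k - 2p_{k-n}$ stays nonnegative for the chosen $k = \lceil 2n(1 + \log 2/\log n - (\log 2\log\log n - \log^2 2 - \log 2)/\log^2 n + U(n)) \rceil$ once $n \geq 5\,225$. This requires (i) controlling $\log k$ and $\log\log k$ in terms of $\log n$ and $\log\log n$ — note $\log k = \log(2n) + O(\log\log n/\log n) = \log n + \log 2 + o(1)$, so substituting introduces further lower-order terms that must be tracked; (ii) bounding the error terms in both Corollaries uniformly for $k \geq 2 \cdot 5\,225$; and (iii) handling the ceiling: since $\lceil x \rceil < x + 1$ and the right-hand side grows like $2n$, the $+1$ is negligible asymptotically but must be checked to be dominated by the slack in the inequality for all $n \geq 5\,225$, which is where the specific constant $0.565$ in $c_2$ (as opposed to a cleaner constant) presumably comes from — it is tuned so the threshold is exactly $5\,225$. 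I would organize the proof as: (1) state the two auxiliary results and Srinivasan's lemma as displayed inequalities; (2) define $k_0(n) = \lceil 2n(1 + \cdots + U(n))\rceil$ and reduce, via Srinivasan's lemma, to proving $p_{k_0} \geq 2 p_{k_0 - n}$; (3) insert the explicit bounds for $p_{k_0}$ and $p_{k_0-n}$, reducing to an inequality purely in $n$; (4) simplify that inequality to something of the form $\Phi(n) \geq 0$ where $\Phi$ is an explicit combination of $\log n, \log\log n$; (5) show $\Phi$ is eventually positive by an elementary monotonicity/derivative argument and check the finitely many remaining cases $5\,225 \leq n \leq n_1$ numerically. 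Steps (3)–(4) are the computational heart; step (5)'s finite check is routine but nontrivial in range.
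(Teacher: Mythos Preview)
Your reduction in step~(2) is backwards, and this is a genuine gap. Srinivasan's lemma (Lemma~3.1 in the paper) is a \emph{necessary} condition on $s=\pi(R_n)$: it says $p_s>2p_{s-n}$. It is \emph{not} a sufficient condition for $\pi(R_n)\le k_0$ at a single index $k_0$. Concretely, for $n=2$ and $k_0=4$ one has $p_4=7>6=2p_2$, yet $\pi(R_2)=5>4$. So proving $p_{k_0}\ge 2p_{k_0-n}$ for your chosen $k_0$ tells you nothing about $\pi(R_n)\le k_0$. Even with the inequality reversed, showing $p_{k_0}\le 2p_{k_0-n}$ only excludes $s=k_0$; it does not exclude $s>k_0$. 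What is missing from your outline is a monotonicity argument in the index variable, and correspondingly the direction of the prime bounds you need is the opposite of what you wrote.

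The paper's actual argument runs as follows. With $G$ and $H$ the explicit upper and lower bounds for $p_k$ from Lemmas~2.1 and~2.2, set $F_1(x)=G(x)-2H(x-n)$ on $(2n,2.6n)$. Since $p_s<G(s)$ and $p_{s-n}>H(s-n)$, Srinivasan's lemma gives $F_1(s)>p_s-2p_{s-n}>0$. One then proves (Proposition~3.2) that $F_1$ is strictly decreasing on $(2n,2.6n)$, and separately that $F_1(\alpha)<0$ for $\alpha=2n(1+\gamma(n))$, where $\gamma(n)$ is exactly the bracketed expression in the theorem. Monotonicity then forces $s<\alpha$. Thus the estimates enter with the \emph{upper} bound on $p_s$ and the \emph{lower} bound on $p_{s-n}$, opposite to what you proposed, and the bulk of the work is the verification that $F_1(\alpha)<0$, carried out by expanding $A_1,B_1,C_1,D_1$ (the pieces of $F_1(\alpha)/(2n)$) to the required order. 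No ceiling is taken; the argument stays continuous throughout, with a finite computer check for $5\,225\le n<528\,491\,312$.
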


With the same method, we used for the proof of Theorem \ref{thm102}, we get the following more precised lower bound for the number of primes not exceeding the 
$n$th Ramanujan prime.

\begin{thm} \label{thm103}
Let $n$ be a positive integer satisfying $n \geq 1\,245$ and let
\begin{equation}
L(x) = \frac{\log 2 \log x(\log \log x)^2 - d_1 \log x \log \log x + d_2 \log x - \log^22 \log \log x + \log^32 + \log^22}{\log^4x + 
\log^3x \log \log x - \log^3x \log 2 - \log^2x \log 2}, \tag{1.9} \label{1.9}
\end{equation}
where $d_1 = 2 \log^22 + \log 2 + 1.472$ and $d_2 = \log^32 + 2\log^22 - 2.51$. Then
\begin{displaymath}
\pi(R_n) > 2n \left( 1 + \frac{\log 2}{\log n} - \frac{\log 2 \log \log n - \log^22 - \log 2}{\log^2n} + L(n)\right).
\end{displaymath}
\end{thm}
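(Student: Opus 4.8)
The plan is to follow the strategy used by Yang and Togb\'e for Proposition \ref{prop101}, but to feed in the sharper explicit estimates for the $m$th prime from \cite[Corollary 1.2 and Corollary 1.4]{axler2017} in place of Dusart's bounds. Write $k = \pi(R_n)$, so that $R_n = p_k$. By Srinivasan's Lemma \cite[Lemma 2.1]{srinivasan} one has $\pi(R_n/2) = \pi(R_n) - n = k-n$, which translates into
\[
p_{k-n} \le \frac{p_k}{2} < p_{k-n+1}, \qquad\text{equivalently}\qquad 2p_{k-n} \le p_k < 2p_{k-n+1}.
\]
For the \emph{lower} bound on $\pi(R_n)$ only the right-hand inequality $p_k < 2p_{k-n+1}$ is needed. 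Denote by $p_m > P^-(m)$ and $p_m < P^+(m)$ the lower and upper bounds for $p_m$ taken from \cite{axler2017}, valid for all $m$ above some explicit threshold. Substituting these into $p_k < 2p_{k-n+1}$ yields $P^-(k) < p_k < 2p_{k-n+1} < 2P^+(k-n+1)$, so that $k$ necessarily satisfies
\[
P^-(k) < 2\,P^+(k-n+1).
\]

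It then remains to convert this constraint into the asserted explicit lower bound $k > F(n)$, where $F(n)$ denotes the right-hand side of the inequality in the statement. I would do this by studying the auxiliary function $g(m) = P^-(m) - 2P^+(m-n+1)$. Differentiating $P^\pm$ one checks that on the relevant range $g'(m)<0$ (heuristically, near $m \approx 2n$ one has $g'(m)\approx \log(2n) - 2\log n < 0$), so $g$ is strictly decreasing there. Since $g(k) < 0$ by the previous paragraph, it suffices to prove that $g(F(n)) \ge 0$, i.e.
\[
P^-\bigl(F(n)\bigr) \ge 2\,P^+\bigl(F(n) - n + 1\bigr),
\]
because then $g$ strictly decreasing together with $g(F(n)) \ge 0 > g(k)$ forces $k > F(n)$. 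The bound $n \ge 1245$ should come out of the threshold for validity of the estimates $P^\pm$ and of the monotonicity of $g$, together with a direct numerical verification for the finitely many small $n$ not covered by the asymptotic argument.

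The computational core of the argument --- and the step I expect to be the main obstacle --- is the verification of $P^-(F(n)) \ge 2P^+(F(n)-n+1)$. Concretely one has to expand $\log F(n)$ and $\log\log F(n)$ as explicit functions of $\log n$ and $\log\log n$ (and likewise for $F(n) - n + 1 \approx n(1 + \tfrac{2\log 2}{\log n} - \cdots)$), substitute into the formulas defining $P^\pm$, and check that after the deliberate cancellation of the leading terms the remainder has the right sign. The function $L$ in \eqref{1.9} is precisely engineered for this: it is essentially what one obtains by solving an approximate version of the equation $P^-(F(n)) = 2P^+(F(n)-n+1)$ for the correction term of order $(\log\log n)^2/\log^3 n$, while the constants $d_1 = 2\log^2 2 + \log 2 + 1.472$ and $d_2 = \log^3 2 + 2\log^2 2 - 2.51$ are chosen to absorb the explicit error terms appearing in the Axler 2017 estimates and to leave a definite sign. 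The real difficulty is therefore bookkeeping: rigorously tracking every lower-order and error contribution through a rather long chain of estimates, rather than any single conceptual step. The proof of Theorem \ref{thm102} runs entirely in parallel, using instead the left-hand inequality $2p_{k-n} \le p_k$, the auxiliary function $m \mapsto P^+(m) - 2P^-(m-n)$, and the function $U$ from \eqref{1.8}.
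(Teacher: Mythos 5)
Your proposal is correct and follows essentially the same route as the paper: the inequality $p_k < 2p_{k-n+1}$ (the paper records this as Lemma \ref{lem401}), the auxiliary function $g(m)=P^-(m)-2P^+(m-n+1)$ (the paper's $F_2$ in \eqref{4.1}), a monotonicity argument on $(2n,2.6n)$ (Proposition \ref{prop402}), showing $g(\beta)\ge 0$ at the candidate lower bound $\beta$, and finishing the small-$n$ range by direct computation. The only thing you underplay is how far the explicit-estimate argument actually reaches -- the paper runs it for $n\ge 528\,491\,312$ and then verifies $1245\le n<528\,491\,312$ by machine, so the ``finitely many small $n$'' is a fairly substantial computation rather than a handful of cases.
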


A direct consequence of Theorem \ref{thm102} and Theorem \ref{thm103} is the following result, which implies the correctness of a conjecture stated by Yang and 
Togb\'e \cite[Conjecture 5.1]{yangtogbe} in 2015.

\begin{kor} \label{kor104}
Let $n \geq 2$ be a positive integer. Then
\begin{displaymath}
\pi(R_n) = 2n \left( 1 + \frac{\log 2}{\log n} - \frac{\log 2 \log \log n - \log^22 - \log 2}{\log^2n} + \frac{\log 2(\log \log n)^2}{\log^3 n} + O\left( 
\frac{\log \log n}{\log^3n} \right) \right).
\end{displaymath}
\end{kor}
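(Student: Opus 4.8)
The plan is to read off Corollary \ref{kor104} directly from the explicit bounds of Theorem \ref{thm102} and Theorem \ref{thm103}. The crucial point is that the correction terms $U(x)$ and $L(x)$, although they carry the different constants $c_1,c_2$ and $d_1,d_2$, have exactly the same leading asymptotic behaviour, namely
\[
U(x) = \frac{\log 2\,(\log\log x)^2}{\log^3 x} + O\!\left(\frac{\log\log x}{\log^3 x}\right), \qquad L(x) = \frac{\log 2\,(\log\log x)^2}{\log^3 x} + O\!\left(\frac{\log\log x}{\log^3 x}\right)
\]
as $x \to \infty$. To establish this I would rewrite the common denominator in \eqref{1.8} and \eqref{1.9} as $\log^4 x\bigl(1 + (\log\log x - \log 2)/\log x - \log 2/\log^2 x\bigr)$ and observe that the numerators equal $\log 2\,\log x\,(\log\log x)^2 + O(\log x\log\log x)$, the constants $c_1,c_2$ (respectively $d_1,d_2$) influencing only the error term. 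Dividing, and expanding the reciprocal of the bracket as a geometric series $1 - (\log\log x)/\log x + \cdots$, the term $\log 2\,(\log\log x)^2/\log^3 x$ survives, while the tail of the geometric expansion contributes $O\bigl((\log\log x)^3/\log^4 x\bigr)$ and the sub-leading numerator terms contribute $O(\log\log x/\log^3 x)$; both are $O(\log\log x/\log^3 x)$ since $(\log\log x)^2/\log x \to 0$.

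Granting this, I would multiply through by $2n$. For every integer $n \ge 5\,225$ Theorem \ref{thm102} then yields
\[
\pi(R_n) < 2n\left(1 + \frac{\log 2}{\log n} - \frac{\log 2\log\log n - \log^2 2 - \log 2}{\log^2 n} + \frac{\log 2\,(\log\log n)^2}{\log^3 n}\right) + O\!\left(\frac{n\log\log n}{\log^3 n}\right),
\]
and Theorem \ref{thm103} yields the matching lower bound with the same main terms and an error of the same order. Absorbing $O(n\log\log n/\log^3 n)$ back into the bracket as $2n\cdot O(\log\log n/\log^3 n)$ gives the asserted identity for all $n \ge 5\,225$. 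For the finitely many remaining integers $n$ with $2 \le n \le 5\,224$ there is nothing to do: each contributes just a single value of $\pi(R_n)$ and of the explicit main-term expression (both well defined, since $\log^3 n \neq 0$ for $n \ge 2$), so enlarging the implied constant makes the $O$-statement valid there too. Combining the two ranges proves the corollary, and in particular confirms the conjecture of Yang and Togb\'e.

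This is essentially a bookkeeping argument, so I do not anticipate a genuine obstacle; the one place that needs care is the expansion of the rational functions $U$ and $L$, where one must check that the cubic contribution $-\log 2\,(\log\log n)^3/\log^4 n$ coming from the denominator, together with the sub-leading numerator terms, really is swallowed by $O(\log\log n/\log^3 n)$. One should also note that the hypothesis $n \ge 2$ (rather than $n \ge 1$) is exactly what is needed for the statement to make sense, since $R_1 = 2$ lies outside the regime where $\log n \neq 0$.
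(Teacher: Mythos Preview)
Your proposal is correct and is exactly the argument the paper has in mind: the paper does not write out a proof of Corollary~\ref{kor104} at all, but simply presents it as ``a direct consequence of Theorem~\ref{thm102} and Theorem~\ref{thm103}'', and your expansion of $U(x)$ and $L(x)$ to $\log 2\,(\log\log x)^2/\log^3 x + O(\log\log x/\log^3 x)$, together with absorbing the finitely many small $n$ into the implied constant, is precisely how that direct consequence is obtained.
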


The initial motivation for writing this paper, was the following conjecture stated by Sondow, Nicholson and Noe \cite[Conjecture 1]{sondownicholsonnoe} 
involving $\pi(R_n)$.

\begin{conj}[Sondow, Nicholson, Noe] \label{conj105}
For $m = 1,2,3, \ldots$, let $N(m)$ be given by the following table:
\begin{center}
\begin{tabular}{|l||c|c|c|c|c|c|c|c|}
\hline
$m$\rule{0mm}{4mm}    & $1$ & $   2$ & $  3$ & $  4$ & $ 5$ & $ 6$ & $7,8, \ldots, 19$ & $20,21,\ldots$ \\ \hline
$N(m)$\rule{0mm}{4mm} & $1$ & $1245$ & $189$ & $189$ & $85$ & $85$ & $             10$ & $           2$ \\ \hline
\end{tabular} .
\end{center}
Then we have
\begin{equation}
\pi(R_{mn}) \leq m \pi(R_n) \q\q \forall \, n \geq N(m). \tag{1.10} \label{1.10}
\end{equation}
\end{conj}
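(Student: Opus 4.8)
The plan is to treat $m=1$ separately, where \eqref{1.10} reads $\pi(R_n)\le\pi(R_n)$ and is trivial, and for $m\ge 2$ to play the explicit upper bound of Theorem \ref{thm102} for $\pi(R_{mn})$ against the explicit lower bound of Theorem \ref{thm103} for $\pi(R_n)$. Writing
\[
F(x)=\frac{\log 2}{\log x}-\frac{\log 2\log\log x-\log^22-\log 2}{\log^2 x},
\]
Theorem \ref{thm102} applied with $mn$ in place of $n$ gives $\pi(R_{mn})<2mn\bigl(1+F(mn)+U(mn)\bigr)$ as soon as $mn\ge 5\,225$, while $m$ times Theorem \ref{thm103} gives $m\pi(R_n)>2mn\bigl(1+F(n)+L(n)\bigr)$ as soon as $n\ge 1\,245$. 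Hence, in this regime it suffices to establish the analytic inequality
\[
F(n)-F(mn)\ \ge\ U(mn)-L(n).
\]

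First I would reduce to the case $m=2$. A short computation shows that $F$ and $U$ are both decreasing on $[5\,225,\infty)$, so for fixed $n$ the left-hand side above is nondecreasing in $m$ and the right-hand side is nonincreasing in $m$; it therefore suffices to prove the inequality for $m=2$. For that, I would bound $F(n)-F(2n)$ from below — its main contribution is $\log 2\bigl(\tfrac{1}{\log n}-\tfrac{1}{\log 2n}\bigr)=\frac{\log^22}{\log n\,\log 2n}$, of size $(\log n)^{-2}$, with the $\log\log$-corrections estimated explicitly — and bound $U(2n)-L(n)$ from above, the key point being that the leading parts $\frac{\log 2(\log\log x)^2}{\log^3 x}$ of $U$ and $L$ almost cancel, so that what survives is of size $\frac{(d_1-c_1)\log\log n}{\log^3 n}=\frac{1.472\log\log n}{\log^3 n}$. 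Comparing the two estimates produces a condition of the rough shape that $\log n$ must exceed about $3\log\log n$ plus lower-order terms; solving this with the actual constants yields an admissible threshold $n_0$, which turns out to be of order $10^{6}$.

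For the remaining pairs $(m,n)$ with $n\ge N(m)$ — those with $n<n_0$, or $n<1\,245$, or $mn<5\,225$ — the plan is a finite verification. When $m$ is large compared with $n$ one still invokes Theorem \ref{thm102} for $\pi(R_{mn})$ once $mn\ge 5\,225$, now combined with the exact value of $\pi(R_n)$: since $2n\bigl(1+F(mn)+U(mn)\bigr)\to 2n<\pi(R_n)$ as $m\to\infty$ (by \eqref{1.3}), the inequality $2n\bigl(1+F(mn)+U(mn)\bigr)\le\pi(R_n)$ holds for every $m$ past an explicit bound depending on $n$. What is then left is a genuinely finite set of pairs, in each of which the index $mn$ is at most of order $2n_0$; these are checked directly by computing the Ramanujan primes $R_k$ together with their prime-counting indices $\pi(R_k)$ for $k$ up to about $2\cdot 10^{6}$ (equivalently $R_k$ up to about $10^{8}$) and comparing with the table defining $N(m)$.

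The hard part is the case $m=2$. The gain $F(n)-F(2n)$ is only of size $(\log n)^{-2}$, whereas the combined error $U(2n)-L(n)$ carried over from Theorems \ref{thm102} and \ref{thm103} is of size $\log\log n\,(\log n)^{-3}$; moreover $L(n)$ is genuinely negative for all $n$ below an astronomically large bound — its sign changes only once $\log 2(\log\log n)^2$ overtakes $d_1\log\log n$ — so it eats into the already modest gain. Consequently $n_0$ cannot be pushed much below $10^{6}$ with these bounds, so the analytic step must be done with fully explicit constants and essentially no slack, and the finite step requires verifying \eqref{1.10} for more than a million values of $n$, hence generating Ramanujan primes well into the range $10^{8}$. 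By contrast, for $m\ge 3$ the admissible threshold drops to a few thousand and those cases are comparatively routine.
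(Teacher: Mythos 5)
Your analytic framework is exactly the one the paper uses in Section~5: combine the explicit upper bound of Theorem~\ref{thm102} applied to $mn$ with $m$ times the explicit lower bound of Theorem~\ref{thm103}, reducing the problem to a comparison of the resulting correction terms. The paper clears the common denominators and works with the resulting polynomial-in-$\log$ quantity $W_m(n)$ (equation~\eqref{5.5} and the discussion around \eqref{5.9}), but that is equivalent to your comparison $F(n)-F(mn)\ge U(mn)-L(n)$. Your proposed reduction to $m=2$ via monotonicity of $F$ and $U$ is a reasonable organizational shortcut that the paper does not make; the paper instead trifurcates by $m\in\{2\}$, $m\in\{3,\dots,19\}$, $m\ge 20$ and establishes separate (much lower) thresholds $4\,903\,689$, $6\,675$, $1\,245$ respectively, so that the heavy computation is confined to $m=2$. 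The reduction costs you a worst-case threshold of several million even for large $m$, where the paper needs only $n\ge1\,245$. For the ``$m$ large, $n$ small'' tail you invoke Theorem~\ref{thm102} directly against the exact value of $\pi(R_n)$; the paper instead uses Lemma~\ref{lem501} (an explicit form of $R_n\le p_{\lceil 2(1+\e)n\rceil}$ from \cite{axler2016}) with $\e=\pi(R_n)/2n-1$. Either route produces, for each fixed $n\le1\,244$, an explicit $M(n)$ beyond which the inequality holds, leaving a finite list of $m$ to check.

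The genuine problem is that your plan cannot succeed as stated, because Conjecture~\ref{conj105} is false. The finite verification you defer to the end would break: the paper finds, and records in Theorem~\ref{thm107} and the closing Remark, that $38\,\pi(R_9)-\pi(R_{9\cdot38})=-2<0$, so \eqref{1.10} fails at $(m,n)=(38,9)$, which satisfies $n=9\ge N(38)=2$. Your own outlined procedure for the $m\ge20$, $n<1\,245$ range — compute an explicit $M(n)$ and then check $m<M(n)$ by hand — would hit exactly this pair (for $n=9$ the relevant range of $m$ comfortably contains $38$), so the final step would disprove rather than confirm the conjecture. What is provable, and what the paper proves, is the corrected statement that \eqref{1.10} holds for all $(m,n)$ with $n\ge N(m)$ except $(38,9)$; your write-up needs to acknowledge this exception, or it will present an argument whose last step necessarily fails.

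One smaller caveat: the monotonicity-in-$m$ reduction relies on $U$ being decreasing on $[5\,225,\infty)$, which you assert but do not verify. It is plausible — $U(x)\sim\log 2(\log\log x)^2/\log^3x$ — but since the whole $m=2$ case has essentially no slack, this monotonicity should be checked explicitly from the closed form \eqref{1.8} before being used to collapse all $m\ge2$ onto $m=2$.
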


Note that the inequality \eqref{1.10} clearly holds for $m = 1$ and every positive integer $n$. In the cases $m = 2, 3, \ldots, 20$, the inequality 
\eqref{1.10} has been verified for every positive integer $n$ with $R_{mn} < 10^9$. For any fixed positive integer $m$, we have, by \eqref{1.4}, $\pi(R_{mn}) 
\sim 2mn \sim m\pi(R_n)$ as $n \to \infty$. A first result in the direction of Conjecture \ref{conj105} is due to Yang and Togb\'e \cite[Theorem 
1.3]{yangtogbe}. They used Proposition \ref{prop101} to find the following result, which proves Conjecture \ref{conj105} when $n$ satisfies $n > 10^{300}$.

\begin{prop}[Yang, Togb\'e] \label{prop106}
For $m = 1,2,3, \ldots$, and $n > 10^{300}$, we have
\begin{displaymath}
\pi(R_{mn}) \leq m\pi(R_n).
\end{displaymath}
\end{prop}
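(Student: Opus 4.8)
The plan is to sandwich $\pi(R_{mn})$ and $\pi(R_n)$ between the bounds of Proposition~\ref{prop101} and thereby reduce the assertion, for each fixed $m\ge 2$, to an elementary inequality in $n$; the case $m=1$ is trivial. So fix $m\ge 2$ and $n>10^{300}$ and note that $mn\ge n>10^{300}$, so Proposition~\ref{prop101} applies to both $R_n$ and $R_{mn}$ and yields $\pi(R_{mn})<\alpha(mn)$ and $\pi(R_n)>\beta(n)$. Hence it suffices to prove $\alpha(mn)\le m\beta(n)$. Writing
\[
f(x)=\frac{\log 2}{\log x}-\frac{\log 2\log\log x-\log^2 2-\log 2}{\log^2 x},
\]
one has $\alpha(x)=2x\bigl(1+f(x)+0.13/\log^2 x\bigr)$ and $\beta(x)=2x\bigl(1+f(x)-0.11/\log^2 x\bigr)$, so dividing $\alpha(mn)\le m\beta(n)$ by $2mn$ the inequality to be proved becomes
\[
f(n)-f(mn)\ \ge\ \frac{0.11}{\log^2 n}+\frac{0.13}{\log^2(mn)}.
\]

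Next I would set $a=\log n$ and $b=\log(mn)=a+\log m$, so that $b\ge a>\log(10^{300})>690$. Expanding $f(n)-f(mn)$ as a difference of three terms, using that $t\mapsto\log t/t^2$ is decreasing on $[\sqrt e,\infty)$, dropping a nonnegative term, and invoking $\frac1{a^2}-\frac1{b^2}=\bigl(\frac1a-\frac1b\bigr)\bigl(\frac1a+\frac1b\bigr)\le\frac2a\bigl(\frac1a-\frac1b\bigr)$, one gets
\[
f(n)-f(mn)\ \ge\ \log 2\left(\frac1a-\frac1b\right)\left(1-\frac{2\log a}{a}\right)\ \ge\ 0.98\log 2\left(\frac1a-\frac1b\right),
\]
the last step because $\frac{2\log a}{a}<0.02$ for $a>690$. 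Since $\frac1a-\frac1b=\log m/(ab)$, multiplying the target inequality by $ab$ reduces it to
\[
0.98\log 2\cdot\log m\ \ge\ 0.11\,\frac ba+0.13\,\frac ab;
\]
as $b/a=1+\log m/a$ and $a/b\le 1$, it is enough that $\log m\bigl(0.98\log 2-0.11/a\bigr)\ge 0.24$, and for $a>690$ and $m\ge 2$ the left-hand side exceeds $0.67\log 2>0.24$. This gives $\alpha(mn)\le m\beta(n)$ and finishes the proof.

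The argument has no deep obstacle; the only point that needs care is arranging the lower bound for $f(n)-f(mn)$ so that it is valid uniformly in $m\ge 2$ and not merely for bounded $m$. After the reduction, the tightest case is the smallest admissible value $m=2$, where the huge lower bound $\log n>690$ leaves a wide margin; one must also remember that Proposition~\ref{prop101} is being invoked for $R_{mn}$, which is legitimate exactly because $mn>10^{300}$.
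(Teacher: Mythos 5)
Your argument is correct and follows exactly the route the paper attributes to Yang and Togb\'e: apply the two-sided bound of Proposition~\ref{prop101} to both $\pi(R_{mn})$ and $\pi(R_n)$ (legitimate since $mn\ge n>10^{300}$), and reduce $\pi(R_{mn})\le m\pi(R_n)$ to the elementary inequality $\alpha(mn)\le m\beta(n)$, i.e.\ $f(n)-f(mn)\ge 0.13/\log^2(mn)+0.11/\log^2 n$, which you verify with ample margin using $\log n>690$. The arithmetic checks out ($2\log a/a<0.02$ for $a>690$, and $\log 2\,(0.98\log2-0.11/a)\approx 0.47>0.24$), and this is also the same scheme the paper itself uses later for Theorem~\ref{thm107}, just with the sharper bounds of Theorems~\ref{thm102}--\ref{thm103} in place of Proposition~\ref{prop101}.
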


Using the same method, we apply Theorem \ref{thm102} and Theorem \ref{thm103} to get the following result.

\begin{thm} \label{thm107}
The Conjecture \ref{conj105} of Sondow, Nicholson and Noe holds except for $(m,n) = (38,9)$.
\end{thm}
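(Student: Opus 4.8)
The plan is to split the verification of \eqref{1.10} into a finite "small-$n$" range that is handled by direct computation and an asymptotic "large-$n$" range that is handled by the explicit bounds of Theorem \ref{thm102} and Theorem \ref{thm103}. First I would fix the large-$n$ threshold: by Theorem \ref{thm102} we have $\pi(R_{mn}) < 2mn(1 + g(mn))$ where $g(x) = \log 2/\log x - (\log 2\log\log x - \log^2 2 - \log 2)/\log^2 x + U(x)$, valid once $mn \geq 5\,225$; by Theorem \ref{thm103} we have $\pi(R_n) > 2n(1 + h(n))$ with $h(x) = \log 2/\log x - (\log 2\log\log x - \log^2 2 - \log 2)/\log^2 x + L(x)$, valid once $n \geq 1\,245$. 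Thus a sufficient condition for $\pi(R_{mn}) \leq m\pi(R_n)$ is $1 + g(mn) \leq 1 + h(n)$, i.e. $g(mn) \leq h(n)$. Since $g$ is (eventually) decreasing and $g(mn) \leq g(n)$ for $m \geq 1$, while the gap $h(n) - g(n)$ is governed by the difference of the constants in $U$ and $L$ (the leading discrepancy being of order $1/\log^3 n$ with a definite sign), one shows that $g(mn) \leq h(n)$ holds for all $m \geq 1$ as soon as $n \geq n_0$ for some explicit, moderate $n_0$. Here I would carry out the elementary calculus: write $h(n) - g(mn) \geq h(n) - g(n)$ and bound $h(n) - g(n)$ from below by an explicit elementary expression, then check that this expression is positive for $n \geq n_0$. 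The only subtlety is that $n$ might be small while $mn$ is large; but if $n \geq 1\,245$ then $n \geq n_0$ presumably already holds, so for $n \geq n_0$ both theorems apply simultaneously and the argument closes uniformly in $m$.

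Next I would treat the range $2 \leq n < n_0$. For these finitely many values of $n$, the bound \eqref{1.10} must be checked for every $m$. For a fixed such $n$, the right-hand side $m\pi(R_n)$ grows linearly in $m$ with known slope $\pi(R_n)$, while by Theorem \ref{thm102} the left-hand side satisfies $\pi(R_{mn}) < 2mn(1 + g(mn))$; since $g(mn) \to 0$, for all sufficiently large $m$ we get $2mn(1 + g(mn)) < m\pi(R_n)$ as long as $2n < \pi(R_n)$, which is exactly Sondow's inequality \eqref{1.3}. This reduces, for each fixed $n < n_0$, the verification to a finite range $m \leq m_0(n)$, obtained by solving $2n(1 + g(mn)) < \pi(R_n)$ explicitly (using monotonicity of $g$ to get a clean threshold). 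The finitely many remaining pairs $(m,n)$ with $2 \leq n < n_0$ and $m \leq m_0(n)$ are then checked by direct computation of $\pi(R_{mn})$ and $\pi(R_n)$ — using existing tables of Ramanujan primes and the prime counting function, exactly as in the verification "for every $n$ with $R_{mn} < 10^9$" mentioned after Conjecture \ref{conj105}. This computation should turn up exactly one failure, the pair $(m,n) = (38,9)$, and confirm \eqref{1.10} holds (with the tabulated $N(m)$) in every other case; in particular one must check that for $m = 2,\dots,6$ and $m \geq 7$ the thresholds $N(m)$ in the table are sharp, and that at $(38,9)$ one indeed has $\pi(R_{342}) > 38\,\pi(R_9)$.

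The main obstacle will be bookkeeping in the small-$n$ range rather than any deep difficulty: one must be careful that the threshold $m_0(n)$ coming from Theorem \ref{thm102} is large enough to cover all potential exceptions, and in particular that $m_0(9)$ is comfortably above $38$ so that the pair $(38,9)$ is genuinely inside the computationally verified region and is not missed. A secondary point requiring care is the transition zone where $n < n_0$ but $mn \geq 5\,225$, so that Theorem \ref{thm102} applies to $R_{mn}$ but Theorem \ref{thm103} does not apply to $R_n$ — there one cannot use the clean inequality $g(mn) \leq h(n)$ and must instead fall back on the exact value of $\pi(R_n)$ together with \eqref{1.3}, which is why the small-$n$ analysis is organized around $2n < \pi(R_n)$ rather than around the lower bound of Theorem \ref{thm103}. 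Once these ranges are pinned down explicitly, the remaining work is a finite and routine numerical check.
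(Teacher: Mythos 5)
Your overall plan -- an analytic argument for large $n$ via Theorems \ref{thm102} and \ref{thm103}, plus a finite computer check for small $n$ -- matches the structure of the paper's proof. But there is a genuine gap in the analytic step, and the way you organize the finite check would lead to an impractical computation.

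The gap is in the sign of $h(n)-g(n)$. With your notation, $h(n)-g(n)=L(n)-U(n)$. Comparing the constants, $d_1=c_1+1.472$ and $d_2=c_2-3.075$, so the numerator of $U(x)-L(x)$ is $1.472\log x\log\log x+3.075\log x>0$; hence $L(n)<U(n)$, i.e.\ $h(n)<g(n)$. Therefore the chain ``$g(mn)\leq g(n)$ and the gap $h(n)-g(n)$ has a definite sign, so $g(mn)\leq h(n)$'' simply fails: $g(n)\leq h(n)$ is false. What actually saves the argument (and is exactly what the paper's bound on $W_m(n)$ does) is that for $m\geq 2$ the drop $g(n)-g(mn)$ is of order $\log m\cdot\log 2/\log^2 n$, which for large $n$ dominates the deficit $g(n)-h(n)=O(1/\log^3 n)$. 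So the large-$n$ argument does close, but only because $m\geq 2$, and you must make that leading-order comparison explicit rather than rely on $g(mn)\leq g(n)$ alone.

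A second issue is your assumption that a single moderate $n_0$ works for all $m$. Because the margin is smallest for $m=2$, a uniform threshold would have to be on the scale of several million (in the paper, the $m=2$ case needs $n\geq 4\,903\,689$ before the analytic bound kicks in, with computation filling the range $1245\leq n<4\,903\,689$). If you then insisted on checking every $n<n_0$ against every $m$ up to an $m_0(n)$, the computation would explode. The paper instead splits by $m$: the threshold in $n$ is allowed to shrink as $m$ grows ($n\geq 6675$ for $3\leq m\leq 19$, $n\geq 1245$ for $m\geq 20$), which confines the finite check to $n\leq 1244$ once $m\geq 20$. Finally, for that small-$n$, large-$m$ regime the paper does not use Theorem \ref{thm102} at all; it uses Lemma \ref{lem501} (a bound $R_n\leq p_{\lceil 2(1+\e)n\rceil}$ from \cite{axler2016}) applied with $\e=\pi(R_n)/2n-1$, which via \eqref{5.2} yields a much smaller starting value $M(n)$ than the $m\gtrsim 5225/n$ you would be forced into by the hypothesis of Theorem \ref{thm102}. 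Your fallback on Theorem \ref{thm102} for fixed $n$ is logically sound, but it is precisely the device that Lemma \ref{lem501} is there to replace.
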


\section{Preliminaries}

Let $n$ be a positive integer. For the proof of Theorem \ref{thm102} and Theorem \ref{thm103}, we need sharp estimates for the $n$th prime number. The current 
best upper and lower bound for the $n$th prime number were obtained in \cite[Corollary 1.2 and Corollary 1.4]{axler2017} and are given as follows.

\begin{lem} \label{lem201}
For every positive integers $n \geq 46\,254\,381$, we have
\begin{displaymath}
p_n < n \left( \log n + \log \log n - 1 + \frac{\log \log n - 2}{\log n} - \frac{(\log \log n)^2-6\log \log n + 10.667}{2 \log^2 n} \right).
\end{displaymath}
\end{lem}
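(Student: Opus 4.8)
Lemma \ref{lem201} is an explicit Cipolla-type upper bound for $p_n$, and the natural route — the one taken in \cite{axler2017} — is to invert a suitable explicit lower bound for $\pi(x)$. Write
\[
g(n) = \log n + \log\log n - 1 + \frac{\log\log n - 2}{\log n} - \frac{(\log\log n)^2 - 6\log\log n + 10.667}{2\log^2 n},
\]
so that the claim reads $p_n < n\,g(n)$. The plan is to start from a known bound of the shape $\pi(x) > \tfrac{x}{\log x}\bigl(1 + \tfrac{1}{\log x} + \tfrac{2}{\log^2 x} + \tfrac{a}{\log^3 x}\bigr)$, valid for all $x \ge x_0$ with $a$ slightly below $6$ (that is, matching the first terms of the asymptotic expansion of the logarithmic integral; such bounds are available from Dusart-type estimates and from \cite{axler2017} itself), and to prove that $\pi\bigl(n\,g(n)\bigr) \ge n$ for every $n \ge 46\,254\,381$. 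This suffices: from $\pi(y) > F(y)$ and $F(y) \ge n$ with $y = n\,g(n)$ one gets $\pi(y) > n$, hence $\pi(y)\ge n+1$, so $p_{n+1} \le y$ and therefore $p_n < n\,g(n)$, which is the asserted strict inequality. For the stated range of $n$ one has $n\,g(n) \ge x_0$ automatically, so no residual small cases intervene.

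The technical core is the substitution $x = n\,g(n)$ into the $\pi$-bound. First I would expand $\log x = \log n + \log g(n)$: with $w = \log n$ and $w_1 = \log\log n$, write $g(n) = w\,(1 + u)$ where $u = \tfrac{w_1 - 1}{w} + \tfrac{w_1 - 2}{w^2} - \cdots$, so that $\log g(n) = \log w + \log(1+u)$, and expand $\log(1+u)$ far enough to capture all contributions down to order $1/w^2$, keeping an explicit remainder at each truncation. Feeding the resulting expansions of $\log x$ and $\log\log x$ into $\tfrac{x}{\log x}\bigl(1 + \tfrac{1}{\log x} + \tfrac{2}{\log^2 x} + \tfrac{a}{\log^3 x}\bigr)$ and simplifying, the main part should collapse to exactly $n$ — this is precisely what forces the coefficients $-1$, $-2$, and $+10.667$ in $g$ — leaving a remainder term that must be shown to be $\ge 0$ for $n \ge 46\,254\,381$.

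The main obstacle is exactly this last verification: making the nested-logarithm expansion precise enough to pin down the constant $10.667$ together with the threshold $46\,254\,381$. Every truncated series — each $\log(1+u)$, each geometric expansion of $1/(1+v)$ — has to be replaced by a partial sum plus an explicit, correctly signed error, and these errors must be combined so that their total is genuinely dominated by the positive main-term surplus all the way down to the stated $n$; a crude bound would cost either a larger constant than $10.667$ or a much larger threshold. Effectively this is a small optimization problem — how to distribute the slack coming from the $a/\log^3 x$ term of the $\pi$-bound among the various error contributions — and it is the part that requires genuine care rather than routine estimation. As a consistency check one can compare the final bound with the classical expansion $p_n = n\bigl(w + w_1 - 1 + \tfrac{w_1 - 2}{w} - \tfrac{w_1^2 - 6w_1 + 11}{2w^2} + o(1/w^2)\bigr)$: since $10.667 < 11$, the bound lies above $p_n$ for large $n$, as it must. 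One could equally work through $\vartheta(x)$ or $\psi(x)$ instead of $\pi(x)$, but the $\pi$-bound is the most direct input here.
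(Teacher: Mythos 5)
The paper does not actually prove this lemma; it is quoted verbatim from \cite[Corollary~1.2]{axler2017} (the companion preprint ``New estimates for the $n$-th prime number'') and used as an input. So there is no internal proof in this paper to compare against, and a ``proof'' here consisting of the citation would have sufficed.

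That said, your sketch correctly identifies the strategy used in the cited reference: start from an explicit Rosser--Schoenfeld/Dusart-type lower bound $\pi(x) > \frac{x}{\log x}\bigl(1 + \frac{1}{\log x} + \frac{2}{\log^2 x} + \frac{a}{\log^3 x}\bigr)$ for $x \ge x_0$, substitute $x = n\,g(n)$, expand the nested logarithms with explicit, correctly-signed remainders, and show the result is $> n$; the strict-inequality bookkeeping ($\pi(y) > n \Rightarrow \pi(y) \ge n+1 \Rightarrow p_n < p_{n+1} \le y$) is handled correctly, and the sanity check against Cipolla's expansion (the bound's $10.667 < 11$) is the right one. The one thing to be clear-eyed about is that you have described, rather than carried out, the load-bearing step: the entire difficulty of the lemma is in making the truncations explicit enough that the constant $10.667$ and the threshold $46\,254\,381$ drop out, and you say so yourself (``this is the part that requires genuine care''). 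As a reconstruction of what must be happening in \cite{axler2017} it is accurate; as a standalone proof it is an outline with the computation deferred, which mirrors what the paper itself does by deferring to the citation.
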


\begin{lem} \label{lem202}
For every positive integer $n \geq 2$, we have
\begin{displaymath}
p_n > n \left( \log n + \log \log n - 1 + \frac{\log \log n - 2}{\log n} - \frac{(\log\log n)^2-6\log \log n + 11.508}{2\log^2 n} \right).
\end{displaymath}
\end{lem}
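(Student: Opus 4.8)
The plan is to deduce the lower bound for $p_n$ from a correspondingly sharp explicit \emph{upper} bound for the prime counting function, exploiting the elementary equivalence that $p_n > y$ holds if and only if $\pi(y) < n$. Writing $f(n)$ for the expression on the right-hand side of the claimed inequality, it therefore suffices to show $\pi(f(n)) < n$ for every integer $n \geq 2$. For the large values of $n$ I would start from an explicit estimate of the shape $\pi(x) < (x/\log x)\,(1 + 1/\log x + 2/\log^2 x + a_3/\log^3 x + a_4/\log^4 x)$ (or the equivalent statement with $\log x$ in the denominator replaced by $\log x$ minus a suitable rational correction term, as in the estimates underlying Lemma \ref{lem201}), valid for all $x \geq x_0$ with an explicit $x_0$. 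Since $f(n) \geq x_0$ as soon as $n$ exceeds some explicit threshold $n_0$ — of the same order of magnitude as the bound $46\,254\,381$ appearing in Lemma \ref{lem201} — this reduces the problem, for $n \geq n_0$, to a purely analytic inequality.

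To make that inequality explicit I would set $L = \log n$ and $M = \log\log n$, so that $f(n) = n\,g(L,M)$ with $g(L,M) = L + M - 1 + (M-2)/L - (M^2 - 6M + 11.508)/(2L^2)$, and compute $\log f(n) = L + \log g(L,M) = L + \log L + \log\!\big(1 + (M-1)/L + (M-2)/L^2 - \cdots\big)$. Expanding the last logarithm by its Taylor series with explicit two-sided remainder estimates — keeping enough terms to reach the $L^{-2}$ level, since that is the accuracy at which the constant $11.508$ is felt — yields an upper bound for $\pi(f(n))/n$ that is a rational function of $L$ and $M$. Clearing the (manifestly positive) denominator, the required inequality $\pi(f(n)) < n$ becomes the positivity of an explicit polynomial $P(L,M)$ for all $n \geq n_0$.

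Because $M = \log L$ is itself a function of $L$, $P(L,M)$ is really a one-variable function; but it is more convenient to treat $M$ as a free parameter subject to the constraint $M = \log L$, to bound $M$ crudely from above and below on the range $n \geq n_0$, and to check that $P(L,M) > 0$ there by differentiating in $L$ and verifying monotonicity for $L \geq L_0 := \log n_0$ together with positivity at $L_0$. This is a routine, if slightly tedious, calculus argument; the constant $11.508$ has been chosen (larger than the asymptotically optimal value, which is $11$) precisely so that this step goes through all the way down to the threshold $n_0$.

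It remains to cover the range $2 \leq n < n_0$, and here I would argue in two stages. First, for $n$ above roughly $10^7$ but below $n_0$ I would replace the strong $\pi$-estimate above by a sequence of weaker explicit upper bounds for $\pi(x)$ valid on successively smaller ranges (for instance Dusart-type or Rosser--Schoenfeld-type bounds), repeating the reduction of the previous two paragraphs on each such range; each weaker bound suffices on its own interval because the target inequality there is far from tight. Second, for the genuinely small $n$ I would simply verify $\pi(f(n)) < n$ directly using the tabulated values of $p_n$. \textbf{The main obstacle} is the crossover: one must choose the constant $11.508$ and the threshold $n_0$ so that the clean inequality of the previous paragraphs holds for \emph{all} $n \geq n_0$, while simultaneously the weaker tools together with the finite computation genuinely close the gap below $n_0$. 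Balancing these two requirements — and in particular controlling the Taylor remainders sharply enough that $11.508$, rather than a larger constant, suffices — is where essentially all of the work lies.
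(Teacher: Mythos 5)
First, a point of comparison: the paper does not prove Lemma \ref{lem202} at all. Both Lemma \ref{lem201} and Lemma \ref{lem202} are quoted from the author's separate paper \cite{axler2017} (Corollaries 1.2 and 1.4 there), so there is no internal argument to measure your proposal against; what you are really sketching is a proof of the cited result. Your overall route --- reduce $p_n>f(n)$ to $\pi(f(n))<n$, insert an explicit upper bound for $\pi(x)$, expand $\log f(n)$ with controlled remainders, and finish with a finite verification --- is indeed the standard way such estimates are established, and it is in the spirit of the machinery behind \cite{axler2017}.

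As a proof, however, the proposal has a genuine gap: every step that actually produces the constant $11.508$ and the range $n\ge 2$ is asserted rather than carried out, and the quantitative requirements are stricter than your sketch acknowledges. You never pin down an admissible explicit bound for $\pi(x)$, and the third-order coefficient is the whole battle: inverting $\pi(x)\le \frac{x}{\log x}\bigl(1+\frac{1}{\log x}+\frac{2}{\log^2 x}+\frac{c_3}{\log^3 x}+\cdots\bigr)$ shifts the solution of $\pi(y)=n$ by about $(c_3-6)\,n/\log^2 n$, so to first order the resulting constant is $11+2(c_3-6)$; reaching $11.508$ therefore needs $c_3\lesssim 6.25$, a precision that the Rosser--Schoenfeld or Dusart-type bounds you invoke do not attain (Dusart-type bounds with $c_3\approx 7.6$ would only give a constant around $14$), and which is precisely the nontrivial explicit input (sharp $\vartheta$- and $\mathrm{li}$-based estimates) on which the cited paper rests. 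Your plan for the intermediate range is also not viable as stated: the inequality must resolve a term of size $n/\log^2 n$ inside a quantity of size $n\log n$, i.e.\ it requires $\pi(x)$ to relative accuracy of order $1/\log^3 x$ on \emph{every} subrange, and the omitted lower-order terms of size $n(\log\log n)^3/\log^3 n$ are not negligible compared with the slack $0.254\,n/\log^2 n$ for any computationally accessible $n$, so the target inequality is nowhere ``far from tight'' and weak bounds on subranges cannot close it; in practice one needs $\pi(x)<\mathrm{li}(x)$-type estimates on the RH-verified range and/or a long direct computation down to $n=2$. Since the choice of the $\pi$-bound, the positivity/monotonicity verification, and the finite check are exactly where the difficulty lies and none of them is carried out, the proposal is a reasonable roadmap but does not establish the lemma.
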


\section{Proof of Theorem \ref{thm102}}

To prove Theorem \ref{thm102}, we use the method investigated by Yang and Togb\'e \cite{yangtogbe} for the proof of the upper bound for $\pi(R_n)$ given in 
Proposition \ref{prop101}. First, we note following result, which was obtained by Srinivasan \cite[Lemma 2.1]{srinivasan}.
Although it is a direct consequence of the definition of a Ramanujan prime, it plays an important role in the proof of the upper bound for $\pi(R_n)$ in 
Proposition 
\ref{prop101}.

\begin{lem}[Srinivasan] \label{lem301}
Let $R_n = p_s$ be the $n$th Ramanujan prime. Then we have $2p_{s-n} < p_s$ for every positive integer $n \geq 2$.
\end{lem}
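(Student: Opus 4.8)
The plan is to squeeze everything out of the defining minimality of $R_n$. Write $R_n=p_s$, so $\pi(R_n)=s$ and the lemma asks for $2p_{s-n}<p_s$, i.e. $p_{s-n}<R_n/2$. First I would note the ``easy'' half: since $R_n\ge R_n$, the definition gives $\pi(R_n)-\pi(R_n/2)\ge n$, hence $\pi(R_n/2)\le s-n$; in particular $s>n$, so $p_{s-n}$ is defined, but this is the \emph{wrong} inequality for us — it bounds the primes up to $R_n/2$ from above, whereas proving $p_{s-n}\le R_n/2$ requires the primes up to (roughly) $R_n/2$ to be \emph{plentiful}. So the real input has to be the minimality of $R_n$.

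Next I would use minimality: $R_n$ is the \emph{least} positive integer $m$ such that $(x/2,x]$ contains at least $n$ primes for every real $x\ge m$; equivalently $\pi(x)-\pi(x/2)\ge n$ for all $x\ge m$. Hence $m=R_n-1$ fails this, so there is a real $y$ with $\pi(y)-\pi(y/2)\le n-1$, and since every $x\ge R_n$ already satisfies the inequality, this failure point must lie in the range $R_n-1\le y<R_n$. Now I would compute $\pi(y)$: from $p_{s-1}\le R_n-1\le y<R_n=p_s$ it follows at once that $\pi(y)=s-1$. Substituting, $\pi(y/2)\ge \pi(y)-(n-1)=s-n$, so there are at least $s-n$ primes not exceeding $y/2$, which forces $p_{s-n}\le y/2<R_n/2=p_s/2$. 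Multiplying by $2$ yields $2p_{s-n}<p_s$, as claimed; the strictness is automatic because $y<R_n$ strictly.

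This is a short deduction rather than a theorem with a genuine obstacle; the only points needing a little care are bookkeeping ones. One must justify that ``$R_n-1$ lacks the property'' produces a failure point genuinely in $[R_n-1,R_n)$ and not somewhere $\ge R_n$ — which is exactly the content of $R_n$ being the least such integer — and one must know $p_{s-n}$ exists, i.e. $s>n$, which is where the hypothesis $n\ge 2$ is used (for $n\ge2$ one has $R_n\ge 11$ and $\pi(R_n)>2n$ by \eqref{1.3}, so $s-n>n\ge1$; for $n=1$ the statement is degenerate since $R_1=p_1$ and $p_0$ is undefined). I expect the write-up to be essentially the three-line chain $\pi(y)=s-1\Rightarrow\pi(y/2)\ge s-n\Rightarrow p_{s-n}\le y/2<R_n/2$, with the minimality step being the one place where a sentence of justification is warranted.
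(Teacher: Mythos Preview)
Your argument is correct and is exactly the ``direct consequence of the definition of a Ramanujan prime'' the paper alludes to; the paper does not spell out a proof but cites Srinivasan \cite[Lemma~2.1]{srinivasan}, whose reasoning is the same minimality chain $\pi(y)=s-1\Rightarrow\pi(y/2)\ge s-n\Rightarrow p_{s-n}\le y/2<R_n/2$ that you give.
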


Now, let $n$ be a positive integer. We define for each real $x$ with $2n < x < 2.6n$ the functions
\begin{equation}
G(x) = x \left( \log x + \log \log x - 1 + \frac{\log\log x - 2}{\log x} - \frac{(\log \log x)^2 - 6 \log \log x + 10.667}{2 \log ^2 x}\right) \tag{3.1} 
\label{3.1}
\end{equation}
and
\begin{equation}
H(x) = x \left( \log x + \log \log x - 1 + \frac{\log\log x - 2}{\log x} - \frac{(\log \log x)^2 - 6 \log \log x + 11.508}{2 \log ^2 x}\right), \tag{3.2} 
\label{3.2}
\end{equation}
and consider the function $F_1 : (2n, 2.6n) \to \R$ defined by
\begin{equation}
F_1(x) = G(x) - 2H(x-n). \tag{3.3} \label{3.3}
\end{equation}
In the following proposition, we note a first property of the function $F_1(x)$ concerning its derivative.

\begin{prop} \label{prop302}
Let $n$ be a positive integer with $n \geq 16$. Then $F_1(x)$ is a strictly decreasing function on the interval $(2n, 2.6n)$.
\end{prop}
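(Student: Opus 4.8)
The plan is to show that $F_1'(x) < 0$ for every $x \in (2n, 2.6n)$; since the interval is open and $F_1$ is differentiable there, this gives strict monotonicity. Because $F_1(x) = G(x) - 2H(x-n)$ we have $F_1'(x) = G'(x) - 2H'(x-n)$, so I first compute $G'$ and $H'$ explicitly. Writing $G(x) = xg(x)$ with $g$ the bracketed factor of \eqref{3.1} and applying the product and quotient rules, a short simplification gives
\begin{displaymath}
G'(x) = \log x + \log \log x + \frac{\log \log x - 1}{\log x} - \frac{(\log \log x)^2 - 4 \log \log x + 4.667}{2 \log^2 x} + \frac{(\log \log x)^2 - 7 \log \log x + 13.667}{\log^3 x},
\end{displaymath}
and $H'(y)$ is obtained from this by replacing the constants $4.667$ and $13.667$ with $5.508$ and $14.508$ (a consistency check: this matches $G'(x)-H'(x) = 0.4205(\log x - 2)/\log^3 x$, the derivative of $G(x)-H(x) = 0.4205\,x/\log^2 x$). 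Setting $A(x) = G'(x) - \log x - \log \log x$ and $B(y) = H'(y) - \log y - \log \log y$, we obtain
\begin{displaymath}
F_1'(x) = \bigl( \log x - 2 \log(x-n) \bigr) + \bigl( \log \log x - 2 \log \log(x-n) \bigr) + \bigl( A(x) - 2B(x-n) \bigr).
\end{displaymath}

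Next I estimate the three brackets on $(2n, 2.6n)$, using that there $x < 2.6n$ and $x - n > n \geq 16$, hence also $x > 32$ and every argument of an inner $\log$ exceeds $e$. Monotonicity of $\log$ and of $\log\log$ then yields
\begin{displaymath}
\log x - 2\log(x-n) < \log 2.6 - \log n, \q\q \log \log x - 2 \log \log(x-n) < \log \log(2.6n) - 2 \log \log n.
\end{displaymath}
For the correction terms, observe that each of $A$ and $B$ is a sum of three terms: the quadratics $(\log\log t)^2 - 4\log\log t + c$ and $(\log\log t)^2 - 7\log\log t + c'$ occurring in the last two terms have negative discriminant for the constants in question ($c\in\{4.667,5.508\}$, $c'\in\{13.667,14.508\}$), hence are positive throughout; and $(\log u - 1)/u$ is nonnegative for $u \geq 1$ with maximum $e^{-2}$ at $u = e^2$. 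Differentiating each term once more in $\log t$ shows it is monotone on the relevant range, so its extreme value is taken at the endpoint, and one reads off $A(x) < 0.3$ for $x \geq 32$ and $B(y) > -0.161$ for $y \geq 16$. Consequently $A(x) - 2B(x-n) < 0.63$.

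Combining the three estimates gives
\begin{displaymath}
F_1'(x) < \log 2.6 - \log n + \log \log(2.6n) - 2 \log \log n + 0.63 =: \Psi(n),
\end{displaymath}
so it remains to verify $\Psi(n) < 0$ for all integers $n \geq 16$. A direct evaluation gives $\Psi(16) < 0$ (indeed $\Psi(16) \approx -1.9$), and since
\begin{displaymath}
\Psi'(n) = \frac{1}{n}\left( -1 + \frac{1}{\log(2.6n)} - \frac{2}{\log n} \right) < 0 \q (n \geq 16),
\end{displaymath}
the function $\Psi$ is strictly decreasing, so $\Psi(n) \leq \Psi(16) < 0$ on $n \geq 16$. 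Hence $F_1'(x) < 0$ throughout $(2n, 2.6n)$ and $F_1$ is strictly decreasing there. The one delicate point is making the bounds on $A(x)$ and $2B(x-n)$ sharp enough that the margin at $n = 16$ is comfortably negative; the negative-discriminant observations reduce this to checking a few endpoint values, and there is generous slack, since the leading term $\log 2.6 - \log n$ alone already contributes about $-1.8$ at $n = 16$ and only decreases as $n$ grows.
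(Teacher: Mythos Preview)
Your proof is correct and follows essentially the same route as the paper: compute $F_1'(x)$, drop or bound the lower-order correction terms by numerical constants using the sign of the relevant quadratics and simple monotonicity, and reduce to an inequality of the form $c - \log n + \log\log(2.6n) - 2\log\log n < 0$ (the paper obtains $c = 1.772$, you obtain $c = \log 2.6 + 0.63 \approx 1.586$), which is easily checked for $n \geq 16$. The only cosmetic slip is the claim that $(\log u - 1)/u$ is nonnegative for $u \geq 1$ (it is only for $u \geq e$), but on the relevant range $u = \log y \geq \log 16 > e$ this is harmless.
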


\begin{proof}
Setting
\begin{displaymath}
q_1(x) = \frac{\log \log x-2}{\log x} - \frac{(\log \log x)^2 - 4 \log \log x + 4.667}{2\log^2x} + \frac{(\log \log x)^2 - 7 \log \log x + 13.667}{\log^3x}
\end{displaymath}
and
\begin{align*}
r_1(x) & = - \frac{2(\log \log(x-n)-1)}{\log(x-n)} + \frac{(\log \log(x-n))^2 - 4 \log \log(x-n) + 5.508}{2\log^2(x-n)} \\
& \p{\q\q} - \frac{2(\log \log(x-n))^2 - 14 \log \log(x-n) + 29.016}{\log^3(x-n)},
\end{align*}
a straightforward calculation shows that the derivative of $F_1(x)$ is given by
\begin{displaymath}
F_1'(x) = \log x - 2 \log(x-n) + \log \log x - 2 \log \log(x-n) + \frac{1}{\log x}
+ q_1(x) + r_1(x).
\end{displaymath}
Note that $\log \log(x-n) \geq 1$, $t^2 - 4t + 4.667 > 0$ and $2t^2 - 14t + 29.016 > 0$ for every $t \in \R$. Hence
\begin{align*}
F_1'(x) & < \log x - 2 \log(x-n) + \log \log x - 2 \log \log(x-n) + \frac{1}{\log x} + \frac{\log \log x-2}{\log x} \\
& \p{\q\q} + \frac{(\log \log x)^2 - 7 \log \log x + 13.667}{\log^3x} + \frac{(\log \log(x-n))^2 - 4 \log \log(x-n) + 5.508}{\log^2(x-n)}.
\end{align*}
The function $t \mapsto (\log \log t - 2)/\log t$ has a global maximum at $t = \exp(\exp(3))$. Together with $32 \leq 2n < x < 2.6n$, and the fact that the 
functions $t \mapsto ((\log \log t)^2 - 7 \log \log t + 13.667)/\log^3t$ and $t \mapsto ((\log \log t)^2 - 4 \log \log t + 5.508)/\log^3t$ are monotonic 
decreasing for every $t > 1$, we obtain that
\begin{displaymath}
F_1'(x) < 1.772 - \log n + \log \log(2.6n) - \log(\log^2 n).
\end{displaymath}
Finally, we use the fact that $t\log^2t > e^{1.772}\log(2.6t)$ for every $t \geq 6$ to get $F_1'(x) < 0$ for every $x \in (2n, 2.6n)$, which means that $F_1(x)$ 
is a strictly decreasing function on the interval $(2n, 2.6n)$.
\end{proof}

Next, we define the function $\gamma : \R_{\geq 4} \to \R$ by
\begin{equation}
\gamma(x) = \frac{\log 2 + \log 2/\log x + 0.565/\log^2x}{\log x + \log \log x - \log 2 - \log 2/\log x}. \tag{3.4} \label{3.4}
\end{equation}
A simple calculation shows that
\begin{equation}
\gamma(x) = \frac{\log 2}{\log x} - \frac{\log 2 \log \log x - \log^2 2 - \log 2}{\log^2 x} + U(x), \tag{3.5} \label{3.5}
\end{equation}
where $U(x)$ is defined as in \eqref{1.8}. In the following lemma, we note some useful properties of $\gamma(x)$.

\begin{lem} \label{lem303}
Let $\gamma(x)$ be defined as in \eqref{3.4}. Then the following hold:
\begin{enumerate}
 \item[(a)] $\gamma(x) > 0$ for every $x \geq 8$,
 \item[(b)] $\gamma(x) < \log 2/ \log x$ for every $x \geq 10\,734$,
 \item[(c)] $\gamma(x) < 1/4$ for every $x \geq 10\,734$. 
\end{enumerate}
\end{lem}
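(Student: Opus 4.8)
The plan is to prove the three items in turn, reducing each to an elementary monotonicity statement together with a single numerical check at the relevant lower bound. For part (a), write $\gamma(x) = N(x)/D(x)$ with $N(x) = \log 2 + \log 2/\log x + 0.565/\log^2 x$ and $D(x) = \log x + \log\log x - \log 2 - \log 2/\log x$. For $x \geq 8 > 1$ every summand of $N(x)$ is positive, so $N(x) > 0$. Differentiating the denominator gives $D'(x) = \tfrac{1}{x} + \tfrac{1}{x\log x} + \tfrac{\log 2}{x\log^2 x} > 0$ for all $x > 1$, so $D$ is strictly increasing on $[8,\infty)$; since $D(8) = 2\log 2 + \log(3\log 2) - \tfrac13 > 0$, we get $D(x) > 0$ for every $x \geq 8$, and hence $\gamma(x) = N(x)/D(x) > 0$ there.

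For part (b), since part (a) gives $D(x) > 0$ for $x \geq 8$ (in particular for $x \geq 10\,734$) and $\log x > 0$, the inequality $\gamma(x) < \log 2/\log x$ is equivalent to $N(x)\log x < \log 2\,D(x)$. Expanding both sides, the common term $\log 2\log x$ cancels, and after multiplying through by $\log x$ the inequality reduces to
\[
g(x) := \log 2\,\log x\,\bigl(\log\log x - \log 2 - 1\bigr) > \log^2 2 + 0.565 .
\]
Substituting $u = \log x$ yields $g = \log 2\cdot u(\log u - \log 2 - 1)$ with $\mathrm{d}g/\mathrm{d}u = \log 2\,(\log u - \log 2)$, which is positive for $u > 2$; since $\log x > 2$ for $x \geq 10\,734$, the function $g$ is increasing on $[10\,734,\infty)$, and it remains only to verify $g(10\,734) > \log^2 2 + 0.565$, i.e.\ roughly $3.44 > 1.05$.

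Part (c) then follows by combining (b) with the elementary equivalence $\log 2/\log x < 1/4 \iff x > 16$: as $10\,734 > 16$, for every $x \geq 10\,734$ we obtain $\gamma(x) < \log 2/\log x < 1/4$. The only mildly delicate point in the argument is the reduction in part (b): one must establish the positivity of $D(x)$ (done in part (a)) \emph{before} clearing denominators, and check the monotonicity of $g$ in the variable $u = \log x$ rather than directly in $x$. Neither of these is a genuine obstacle — once the setup is arranged, the whole lemma rests on a single numerical inequality evaluated at the threshold $10\,734$.
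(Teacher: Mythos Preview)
Your proof is correct. Parts (a) and (c) are handled essentially as in the paper (the paper simply declares (a) ``clear'' and observes that (c) follows from (b)). For part (b), however, you take a genuinely different route: the paper uses the expansion \eqref{3.5}, first bounds the remainder by $U(x) < \log 2\,(\log\log x)^2/\log^3 x$ for $x \geq 230$, and then reduces to the inequality $(\log\log x - \log 2 - 1)\log x \geq (\log\log x)^2$ for $x \geq 10\,734$. You instead work directly from the definition \eqref{3.4}, clear denominators (correctly justified by the positivity of $D(x)$ from part (a)), and arrive at the equivalent inequality $\log 2\cdot\log x\,(\log\log x - \log 2 - 1) > \log^2 2 + 0.565$, which you dispatch by monotonicity in $u = \log x$ and a single numerical check at $x = 10\,734$. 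Your argument is more self-contained and avoids the auxiliary function $U$ altogether; the paper's approach has the side benefit that its intermediate bound $U(x) < \log 2\,(\log\log x)^2/\log^3 x$ is reused in the proof of Corollary~\ref{kor304}.
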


\begin{proof}
The statement in (a) is clear. To prove (b), we first note that $U(x) < \log 2 (\log \log x)^2/ \log^3x$ for every $x \geq 230 \geq \exp(\exp(1 + \log 2))$. Now 
we use \eqref{3.5} and the fact that $(\log \log x - \log 2 - 1) \log x \geq (\log \log x)^2$ for every $x \geq 10\,734$, to conclude (b). Finally, (c) is a 
direct consequence of (b).
\end{proof}

Now, we give a proof of Theorem \ref{thm102}.

\begin{proof}[Proof of Theorem \ref{thm102}]
First, we consider the case where $n$ is a positive integer with $n \geq 528\,491\,312 \geq \exp(\exp(3))$. By \eqref{1.3} and \eqref{1.6}, we have $2n < 
\pi(R_n) < 2.6n$. Hence $\pi(R_n) \geq 2n \geq 1\,056\,982\,624$ and $\pi(R_n)-n \geq 528\,491\,312$. Now we apply Lemma \ref{lem201} and Lemma \ref{lem202}
to get that $F_1(\pi(R_n)) > p_{\pi(R_n)} - 2p_{\pi(R_n)-n}$, where $F_1$ is defined as in \eqref{3.3}. Since $R_n = p_{\pi(R_n)}$, Srinivasan's Lemma 
\ref{lem301} yields
\begin{equation}
F_1(\pi(R_n)) > 0. \tag{3.6} \label{3.6}
\end{equation}
For convenience, we write in the following $\gamma = \gamma(n)$ and $\alpha = 2n(1+ \gamma)$. Now, by \eqref{3.5}, we 
need to show that $\pi(R_n) < \alpha$. For this, we first show that $F_1(\alpha) < 0$. By Lemma \ref{lem303}, we have $2n < \alpha < 2.6n$. Further,
\begin{equation}
\frac{F_1(\alpha)}{2n} = (1+ \gamma)\log 2 - \gamma\log n + \gamma + A_1 + B_1 + C_1 + D_1 + (1+2\gamma)\frac{0.841}{2\log^2(n+2n\gamma)}, \tag{3.7} \label{3.7}
\end{equation}
where
\begin{align*}
A_1 & = (1+ \gamma)\log(1+ \gamma) - (1+2\gamma)\log(1+2\gamma), \\
B_1 & = (1+ \gamma)\log \log(2n+ 2n\gamma) - (1+2\gamma)\log \log(n+2n\gamma), \\
C_1 & = (1+ \gamma) \frac{\log\log(2n+2n\gamma) - 2}{\log(2n+2n\gamma)} - (1+2\gamma)\frac{\log\log(n+2n\gamma) - 2}{\log(n+2n\gamma)}, \\
D_1 & = - (1+ \gamma) \frac{(\log \log(2n+2n\gamma))^2 - 6 \log \log(2n+2n\gamma) + 10.667}{2 \log^2(2n+2n\gamma)} \\ 
& \p{\q\q} + (1+2\gamma) \frac{(\log \log(n+2n\gamma))^2 - 6 \log \log(n+2n\gamma) + 10.667}{2 \log^2(n+2n\gamma)}.
\end{align*}
In the following, we give upper bounds for the quantities $A_1$, $B_1$, $C_1$ and $D_1$. We start with $A_1$. We use the inequalities
\begin{equation}
t - \frac{t^2}{2} < \log(1+t) < t, \tag{3.8} \label{3.8}
\end{equation}
which hold for every real $t > 0$, and Lemma \ref{lem303}(c) to get
\begin{equation}
A_1 < (1+\gamma) \gamma - (1+2\gamma)( 2\gamma - 2\gamma^2) = -\gamma - \gamma^2 + 4\gamma^3 < - \gamma. \tag{3.9} \label{3.9}
\end{equation}
Next, we estimate $B_1$. Using the right-hand side inequality of \eqref{3.8}, we easily get
\begin{equation}
B_1 < \frac{(1+\gamma) \log 2}{\log n} - \gamma \log \log n. \tag{3.10} \label{3.10}
\end{equation}
To find an upper bound for $C_1$, we note that $t \mapsto (\log \log t - 2)/\log t$ is a decreasing function on the interval $(\exp(\exp(3)), \infty)$. 
Together with Lemma \ref{lem303}(a), we obtain that the inequality
\begin{equation}
C_1 < 0 \tag{3.11} \label{3.11}
\end{equation}
holds. Finally, we estimate $D_1$. For this purpose, we consider the function $f : (1, \infty) \to \R$ defined by
\begin{displaymath}
f(x) = \frac{(\log \log x)^2 - 6 \log \log x + 10.667}{2 \log^2 x}.
\end{displaymath}
By the mean value theorem, there exists a real number $\xi \in (n + 2n\gamma, 2n+2n\gamma)$ such that $f(2n+2n\gamma) - f(n+2n\gamma) = nf'(\xi)$. Since 
$f''(x) \geq 0$ for every $x > 1$, we get $f'(\xi) \geq f'(n+2n\gamma) \geq f'(n)$. Hence we get
\begin{displaymath}
f(n+2n\gamma) - f(2n+2n\gamma) = - nf'(\xi) \leq -nf'(n) = \frac{(\log \log n)^2 - 7 \log \log n + 13.667}{\log^3 n}.
\end{displaymath}
Therefore
\begin{displaymath}
D_1 < (1+\gamma) \frac{(\log \log n)^2 - 7 \log \log n + 13.667}{\log^3 n} + \gamma f(n+2n\gamma).
\end{displaymath}
Since $f(x)$ is a strictly decreasing function on the interval $(1,\infty)$, it follows that the inequality
\begin{equation}
D_1 < (1+\gamma) \frac{(\log \log n)^2 - 7 \log \log n + 13.667}{\log^3 n} + \gamma \frac{(\log \log n)^2 - 6 \log \log n + 10.667}{2 \log^2 n} \tag{3.12} 
\label{3.12}
\end{equation}
holds. Combining \eqref{3.7} with \eqref{3.9}-\eqref{3.12}, we get
\begin{align*}
\frac{F_1(\alpha)}{2n} & < (1+ \gamma)\log 2 - \gamma\log n + \frac{(1+\gamma) \log 2}{\log n} - \gamma \log \log n + (1+\gamma) \frac{r_1(\log \log n)}{\log^3 
n} \\ 
& \p{\q\q} + \gamma \frac{r_2(\log \log n)}{2 \log^2 n} + (1+2\gamma)\frac{0.841}{2\log^2 n},
\end{align*}
where $r_1(t) = t^2 - 7t + 13.667$ and $r_2(t) = t^2 - 6t + 10.667$. The functions $t \mapsto r_1(\log \log t)/\log t$, $t \mapsto r_1(\log \log t)/\log^2 t$ 
and $t \mapsto r_2(\log \log t)/\log t$ are decreasing on the interval $(1,\infty)$. Hence $r_1(\log \log n) \leq r_1(3)$ and $r_2(\log \log n) \leq r_2(3)$. 
Together with Lemma \ref{lem303}(a), Lemma \ref{lem303}(b) and $n \geq \exp(\exp(3))$, we obtain that
\begin{displaymath}
\frac{F_1(\alpha)}{2n} < (1+ \gamma)\log 2 - \gamma\log n + \frac{(1+\gamma) \log 2}{\log n} - \gamma \log \log n + \frac{0.565}{\log^2n}.
\end{displaymath}
Now we use \eqref{3.4} to get that the right-hand side of the last inequality is equal to $0$. Hence $F_1(\alpha) < 0$. Together with $2n < \pi(R_n), \alpha < 
2.6n$, the inequality \eqref{3.6} and Proposition \ref{prop302}, we get $\pi(R_n) < \alpha$. We conclude by direct computation.
\end{proof}

We get the following weaker but more compact upper bounds for the parameter $s$.

\begin{kor} \label{kor304}
For every positive integer $n \geq 2$, we have
\begin{displaymath}
\pi(R_n) < 2n \left( 1 + \frac{\log 2}{\log n} - \frac{\log 2 \log \log n - \log^22 - \log 2}{\log^2n} + \frac{\log 2(\log \log n)^2}{\log^3n} \right).
\end{displaymath}
\end{kor}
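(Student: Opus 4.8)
The plan is to derive Corollary~\ref{kor304} from Theorem~\ref{thm102} on the range where the latter applies, and to dispose of the remaining small values of $n$ by a direct computation. The essential point is that the simple correction term $\log 2\,(\log\log n)^2/\log^3 n$ already dominates the function $U(n)$ once $n$ is at all large. In fact this domination was recorded in the proof of Lemma~\ref{lem303}(b): one has
\begin{displaymath}
U(x) < \frac{\log 2\,(\log\log x)^2}{\log^3 x} \qquad \text{for every } x \geq 230 \geq \exp(\exp(1+\log 2)),
\end{displaymath}
which is obtained by clearing denominators in~\eqref{1.8} and estimating the resulting polynomial expression in $\log x$ and $\log\log x$, using $\log\log x \geq 1 + \log 2$. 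Since the hypothesis of Theorem~\ref{thm102} forces $n \geq 5\,225 > 230$, for every such $n$ we may combine this estimate with the conclusion of Theorem~\ref{thm102} to obtain
\begin{displaymath}
\pi(R_n) < 2n\left(1 + \frac{\log 2}{\log n} - \frac{\log 2\log\log n - \log^2 2 - \log 2}{\log^2 n} + U(n)\right) < 2n\left(1 + \frac{\log 2}{\log n} - \frac{\log 2\log\log n - \log^2 2 - \log 2}{\log^2 n} + \frac{\log 2\,(\log\log n)^2}{\log^3 n}\right),
\end{displaymath}
which is precisely the asserted inequality for all $n \geq 5\,225$.

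It then remains only to verify the inequality for the finitely many integers $n$ with $2 \leq n \leq 5\,224$. For these the right-hand side is an explicit elementary function of $n$, so it suffices to compute $\pi(R_n)$ in each case --- the relevant values are available from the data underlying~\cite{sloane} --- and to check the inequality numerically. At the bottom of the range there is ample room: for instance $\pi(R_2) = \pi(11) = 5$, while the right-hand side is about $21$.

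The only delicate aspect is this finite check near its upper end. Since $\pi(R_n) \sim 2n$ and the correction terms are bounded, the gap between the two sides shrinks as $n$ approaches $5\,224$, and none of the earlier analytic upper bounds for $\pi(R_n)$ (for instance~\eqref{1.6} or~\eqref{1.7}) is sharp enough to replace the computation there; one genuinely needs the exact tabulated values of $\pi(R_n)$. Beyond this bookkeeping, the proof uses no idea other than the elementary domination $U(n) < \log 2\,(\log\log n)^2/\log^3 n$ together with Theorem~\ref{thm102}.
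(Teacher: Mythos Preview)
Your proof is correct and follows essentially the same approach as the paper: invoke Theorem~\ref{thm102} together with the bound $U(x) < \log 2\,(\log\log x)^2/\log^3 x$ for $x \geq 230$ (noted in the proof of Lemma~\ref{lem303}(b)) to handle $n \geq 5\,225$, and dispose of the remaining finitely many $n$ by direct computation.
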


\begin{proof}
If $n \geq 5\,225$, the corollary follows directly from Theorem \ref{thm102}, since $U(x) \leq \log 2 (\log \log x)^2/\log^3x$ for every $x \geq 230$. For the 
remaining cases of $n$, we use a computer.
\end{proof}

In the next corollary, we reduce the number $10^{300}$ in Proposition \ref{prop101} as follows.

\begin{kor} \label{kor305}
For every positive integer $n$ satisfying $n \geq 4\,842\,763\,560\,306$, we have
\begin{displaymath}
\pi(R_n) < 2n \left( 1 + \frac{\log 2}{\log n} - \frac{\log 2 \log \log n - \log^22 - \log 2 - 0.13}{\log^2n} \right).
\end{displaymath}
\end{kor}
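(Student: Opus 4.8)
The plan is to deduce Corollary \ref{kor305} from the improved upper bound of Theorem \ref{thm102}. Since $4\,842\,763\,560\,306 \geq 5\,225$, Theorem \ref{thm102} applies and gives, for every integer $n \geq 4\,842\,763\,560\,306$,
\[
\pi(R_n) < 2n \left( 1 + \frac{\log 2}{\log n} - \frac{\log 2 \log \log n - \log^22 - \log 2}{\log^2n} + U(n) \right),
\]
with $U$ as in \eqref{1.8}. Comparing this with the bound claimed in Corollary \ref{kor305}, it therefore suffices to prove that $U(n)\log^2 n \leq 0.13$ for every such $n$; equivalently, by \eqref{3.5} and the definition \eqref{3.4} of $\gamma$, it suffices to show
\[
\gamma(n) \leq \frac{\log 2}{\log n} - \frac{\log 2 \log \log n - \log^22 - \log 2 - 0.13}{\log^2n}
\]
for every integer $n \geq 4\,842\,763\,560\,306$.

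The next step is to clear denominators. For $x \geq 4\,842\,763\,560\,306$ both $\log^2 x$ and the denominator $\log x + \log\log x - \log 2 - \log 2/\log x$ of $\gamma(x)$ in \eqref{3.4} are positive, so, writing $L = \log x$, multiplying out by these two positive quantities and dividing by $L$, the last displayed inequality is equivalent to $f(L) \geq 0$, where
\begin{align*}
f(L) &= 0.13\,L - (\log 2)(\log L)^2 + (2\log^22 + \log 2 + 0.13)\log L \\
&\quad{} - \log^32 - 2\log^22 - 0.13\log 2 - 0.565 + \frac{(\log^22)\log L - \log^32 - \log^22 - 0.13\log 2}{L}.
\end{align*}
(The same function $f$ arises, perhaps more transparently, from $U(x)\log^2 x \leq 0.13$ after clearing its positive denominator, using the factorisation $\log^4x + \log^3x \log\log x - \log^3x \log 2 - \log^2x \log 2 = \log^2 x\bigl(\log^2 x + \log x\log\log x - \log x\log 2 - \log 2\bigr)$.) It thus remains to establish $f(L) \geq 0$ for every real $L \geq L_0 := \log(4\,842\,763\,560\,306)$.

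I would prove this by combining a monotonicity statement with a boundary check. Differentiation gives
\[
f'(L) = 0.13 - \frac{2(\log 2)\log L - 2\log^22 - \log 2 - 0.13}{L} + \frac{2\log^22 + \log^32 + 0.13\log 2 - (\log^22)\log L}{L^2}.
\]
For $L \geq 10$ the function $L \mapsto \bigl(2(\log 2)\log L - 2\log^22 - \log 2 - 0.13\bigr)/L$ is decreasing, since its derivative has the sign of $3\log 2 + 2\log^22 + 0.13 - 2(\log 2)\log L$, which is negative once $\log L > (3\log 2 + 2\log^22 + 0.13)/(2\log 2) \approx 2.29$; hence for $L \geq L_0$ this middle term of $f'(L)$ does not exceed its value at $L_0$, which is about $0.099$. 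Moreover, on $[L_0,\infty)$ the last term of $f'(L)$ is negative (because $\log L_0 \approx 3.37 > 2 + \log 2 + 0.13/\log 2$) and bounded below there by $-10^{-3}$. Combining these two facts, $f'(L) > 0.13 - 0.099 - 10^{-3} > 0$ for every $L \geq L_0$, so $f$ is strictly increasing on $[L_0,\infty)$. Since $L_0 \approx 29.2086$, one finally checks by a sufficiently precise numerical evaluation that $f(L_0) > 0$, and concludes that $f(L) > 0$ for every $L \geq L_0$, which proves the corollary.

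The main obstacle is the extreme sharpness of the estimate: $f(L_0)$ turns out to be positive only by a margin of order $10^{-5}$, so $4\,842\,763\,560\,306$ is essentially the best threshold that this method can produce for the bound in Corollary \ref{kor305}. Consequently the boundary evaluation must be carried out to sufficient precision, and — the delicate point — one must make sure that the monotonicity of $f$, and of the auxiliary functions used to bound $f'$, genuinely holds all the way down to $L_0$ and is not merely asymptotic.
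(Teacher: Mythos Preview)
Your proof is correct and follows exactly the paper's route: apply Theorem \ref{thm102} and reduce to the inequality $U(x) \leq 0.13/\log^2 x$ for $x \geq 4\,842\,763\,560\,306$. The paper simply asserts this last inequality, whereas you supply a careful calculus verification (clearing the positive denominator, showing the resulting one-variable function $f(L)$ is increasing on $[L_0,\infty)$, and checking $f(L_0)>0$); your derivative computations and numerical estimates are accurate, so no gap remains.
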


\begin{proof}
Note that $U(x) \leq 0.13/\log^2x$ for every $x \geq 4\,842\,763\,560\,306$. Now we can use Theorem \ref{thm102}.
\end{proof}

\begin{kor} \label{kor306}
Let $n$ be a positive integer satisfying $n \geq 640$. Then
\begin{displaymath}
\pi(R_n) < 2n \left( 1 + \frac{\log 2}{\log n} \right).
\end{displaymath}
\end{kor}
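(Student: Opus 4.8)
The plan is to treat the large values of $n$ by combining Theorem~\ref{thm102} with Lemma~\ref{lem303}(b), exactly as Corollary~\ref{kor305} was deduced from Theorem~\ref{thm102} and a bound on $U$, and to settle the finitely many remaining values of $n$ by a direct computation.

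First I would note that, by \eqref{3.5}, the inequality $\gamma(x) < \log 2/\log x$ of Lemma~\ref{lem303}(b) is just the statement
\[
U(x) < \frac{\log 2 \log \log x - \log^2 2 - \log 2}{\log^2 x},
\]
valid for every $x \geq 10\,734$. Since $10\,734 \geq 5\,225$, Theorem~\ref{thm102} applies for every integer $n \geq 10\,734$ and, after substituting the displayed bound on $U(n)$ into the bound of Theorem~\ref{thm102}, it yields
\[
\pi(R_n) < 2n\left(1 + \frac{\log 2}{\log n}\right)
\]
for all $n \geq 10\,734$. (Equivalently, one can phrase this as $\pi(R_n) < 2n(1+\gamma(n)) < 2n(1+\log 2/\log n)$, using \eqref{3.5} and Lemma~\ref{lem303}(b) in turn; or one could instead start from Corollary~\ref{kor304} and reduce to $(\log\log n)^2 \leq (\log\log n - \log 2 - 1)\log n$, which is the very inequality used in the proof of Lemma~\ref{lem303}(b). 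The route through Theorem~\ref{thm102} is the shortest.)

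It then remains to verify the inequality for the integers $n$ with $640 \leq n \leq 10\,733$. For these I would simply check $\pi(R_n) < 2n(1+\log 2/\log n)$ directly: writing $R_n = p_s$ we have $\pi(R_n) = s$, and all the Ramanujan primes involved (the largest being $R_{10\,733}$, which is of moderate size) are available from the data in \cite{sloane}, so this is a routine finite computation.

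The only delicate point is that Lemma~\ref{lem303}(b) --- equivalently the estimate $(\log \log x - \log 2 - 1)\log x \geq (\log \log x)^2$ used in its proof --- has essentially no slack near its threshold $x = 10\,734$, so one must make sure this threshold is genuinely admissible and that the computational range $[640, 10\,733]$ leaves no gap below it. Apart from this bookkeeping there is no real obstacle, since the analytic content is already contained in Theorem~\ref{thm102} and Lemma~\ref{lem303}.
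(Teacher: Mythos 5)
Your argument is correct and is essentially the paper's own: the paper routes through Corollary~\ref{kor304} rather than through Theorem~\ref{thm102} and Lemma~\ref{lem303}(b) directly, but as you observe the two are the same, since Corollary~\ref{kor304} is Theorem~\ref{thm102} with $U(x)\le \log 2(\log\log x)^2/\log^3 x$, and the sign condition the paper checks for $n\ge 10\,734$ is precisely the inequality $(\log\log n-\log 2-1)\log n\ge(\log\log n)^2$ underlying Lemma~\ref{lem303}(b). Both then finish the range $640\le n\le 10\,733$ by direct computation.
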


\begin{proof}
For every positive integer $n \geq 10\,734$, we have
\begin{displaymath}
\frac{\log 2 \log \log n - \log^22 - \log 2}{\log^2n} - \frac{\log 2(\log \log n)^2}{\log^3n} > 0
\end{displaymath}
an it suffices to apply Corollary \ref{kor304}. We conclude by direct computation.
\end{proof}

\section{Proof of Theorem \ref{thm103}}

Using a simalar argument as in the proof of Lemma \ref{lem301}, Yang and Togb\'e \cite[p. 248]{yangtogbe} derived the following result.

\begin{lem}[Yang, Togb\'e] \label{lem401}
Let $R_n = p_s$ be the $n$th Ramanujan prime. Then we have $p_s < 2p_{s-n+1}$ for every positive integer $n$.
\end{lem}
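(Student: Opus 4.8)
The plan is to follow the proof of Srinivasan's Lemma~\ref{lem301} and read the desired inequality straight off the definition of the Ramanujan prime. Write $R_n = p_s$, so that $s = \pi(R_n)$. The defining property of the $n$th Ramanujan prime, applied at $x = R_n$, says that the interval $(R_n/2,\,R_n]$ contains at least $n$ primes, i.e.
\[
\pi(R_n) - \pi(R_n/2) \geq n .
\]
Since $R_n = p_s$, this is exactly $\pi(p_s/2) \leq s - n$. In particular $s \geq n$, so $s-n+1$ is a positive integer and the prime $p_{s-n+1}$ appearing in the statement is well defined; the case $n = 1$ will come out of the same computation (there $s-n+1 = s$, and the claim reads $p_s < 2p_s$).

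Next I would convert the estimate $\pi(p_s/2) \leq s-n$ back into a lower bound for a prime. There are at most $s-n$ primes not exceeding $p_s/2$, hence the $(s-n+1)$st prime must lie strictly above $p_s/2$; that is, $p_{s-n+1} > p_s/2$. Multiplying by $2$ yields $p_s < 2\,p_{s-n+1}$, which is the assertion. We conclude by direct computation for any small values not covered, although in fact the argument is uniform in $n$.

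I do not expect a real obstacle here. The one place where Srinivasan's Lemma~\ref{lem301} needs extra care is in promoting $2p_{s-n} \leq p_s$ to a \emph{strict} inequality, which there relies on $R_n$ being an odd prime and so is stated only for $n \geq 2$; in the present lemma the shift by $1$ in the index makes strictness automatic, since $\pi(p_s/2) \leq s-n < s-n+1$ already forces $p_{s-n+1} > p_s/2$. Thus the only point requiring attention is the harmless bookkeeping $\pi(R_n/2) = \pi(p_s/2)$ and $\pi(R_n) = s$, both of which are immediate once one recalls that $R_n$ is prime.
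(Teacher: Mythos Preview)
Your argument is correct and is exactly the ``similar argument as in the proof of Lemma~\ref{lem301}'' that the paper attributes to Yang and Togb\'e: apply the defining inequality $\pi(R_n)-\pi(R_n/2)\ge n$ at $x=R_n=p_s$ to get $\pi(p_s/2)\le s-n$, and then observe that this forces $p_{s-n+1}>p_s/2$. The closing sentence about direct computation for small values is unnecessary, since, as you note yourself, the argument already covers every positive integer $n$ uniformly.
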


Next, we define for each positive integer $n$ the function $F_2 : (2n, 2.6n) \to \R$ by
\begin{equation}
F_2(x) = H(x) - 2G(x-n+1), \tag{4.1} \label{4.1}
\end{equation}
where the functions $G(x)$ and $H(x)$ are given by \eqref{3.1} and \eqref{3.2}, respectively. In Proposition \ref{prop302}, we showed that for every positive 
integer $n \geq 16$, the function $F_1(x)$ is decreasing on the interval $(2n, 2.6n)$. In the following proposition, we get a similar result for the function 
$F_2(x)$.

\begin{prop} \label{prop402}
Let $n$ be a positive integer with $n \geq 15$. Then $F_2(x)$ is a strictly decreasing function on the interval $(2n, 2.6n)$.
\end{prop}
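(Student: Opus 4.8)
The plan is to mimic the proof of Proposition \ref{prop302} almost verbatim, with the roles of $G$ and $H$ interchanged and the shift $x - n$ replaced by $x - n + 1$. First I would compute $F_2'(x)$ explicitly. Since $F_2(x) = H(x) - 2G(x-n+1)$, differentiating $H$ contributes the term involving the $11.508$-constant (rather than the $10.667$-constant), while differentiating $-2G(x-n+1)$ contributes twice the derivative of the $G$-shape evaluated at $x - n + 1$. A straightforward calculation should give
\begin{displaymath}
F_2'(x) = \log x - 2\log(x-n+1) + \log\log x - 2\log\log(x-n+1) + \frac{1}{\log x} + q_2(x) + r_2(x),
\end{displaymath}
where $q_2(x)$ is obtained from $q_1(x)$ by swapping the constants $4.667 \to 5.508$ and $13.667 \to 14.508$ (this is the $H$-contribution, which has the larger constant), and $r_2(x)$ is obtained from $r_1(x)$ by swapping $5.508 \to 4.667$ and $29.016 \to 27.334$ and replacing $x - n$ throughout by $x - n + 1$ (this is the $-2G$-contribution). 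I would double-check the exact constants by carrying out the differentiation of \eqref{3.1} and \eqref{3.2} once carefully.

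Next I would bound $F_2'(x)$ from above, following the three-step pattern of Proposition \ref{prop302}. Step one: drop the manifestly negative pieces. As there, $\log\log(x-n+1) \ge 1$ on the relevant range (since $x - n + 1 > n + 1 \ge 16 > e^e$ once $n \ge 15$), and one checks that the relevant quadratics in $t = \log\log(\cdot)$ are positive for all real $t$, so several terms in $q_2$ and $r_2$ can be discarded. Step two: use that $t \mapsto (\log\log t - 2)/\log t$ has its global maximum at $t = \exp(\exp(3))$, and that the $r_i(\log\log t)/\log^3 t$-type functions are monotonically decreasing for $t > 1$, to replace the surviving terms by explicit numerical constants, using $30 \le 2n < x < 2.6n$ and $x - n + 1 > n \ge 15$. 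This should yield an inequality of the shape
\begin{displaymath}
F_2'(x) < C - \log n + \log\log(2.6n) - \log(\log^2 n)
\end{displaymath}
for some explicit constant $C$ slightly larger than the $1.772$ appearing in Proposition \ref{prop302} (the swapped constants are marginally larger). Step three: conclude via the elementary fact $t\log^2 t > e^{C}\log(2.6 t)$ for all $t$ at least some small bound, which holds for $t \ge 15$ (and possibly needs the hypothesis $n \ge 15$ rather than the $n \ge 16$ of Proposition \ref{prop302}, which is presumably why the threshold differs). This gives $F_2'(x) < 0$ on $(2n, 2.6n)$, i.e.\ $F_2$ is strictly decreasing there.

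The main obstacle I anticipate is purely bookkeeping: getting every one of the shifted constants in $q_2$ and $r_2$ right, and then verifying that the final elementary inequality $t\log^2 t > e^{C}\log(2.6t)$ still holds down to $t = 15$ with the new (slightly worse) constant $C$. If it narrowly fails at $t = 15$, one would either sharpen the numerical estimates in step two (there is slack, since several positive quadratics were discarded wholesale) or accept a threshold $n \ge 16$; given that the proposition is stated with $n \ge 15$, I expect the estimates to go through with a small margin, and I would confirm the boundary case $n = 15$ by direct numerical evaluation of the bound on $F_2'$.
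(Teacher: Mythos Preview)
Your proposal is correct and matches the paper's proof essentially line for line. One small correction to your expectation: the swapped constants actually work in your favour (the surviving $4.667$-quadratic is smaller than the $5.508$-quadratic from Proposition~\ref{prop302}), so the paper obtains $C = 1.717 < 1.772$, and the elementary inequality $t\log^2 t > e^{C}\log(2.6t)$ already holds for all $t \geq 6$, comfortably covering $n \geq 15$.
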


\begin{proof}
A straightforward calculation shows that the derivative of $F_2(x)$ is given by
\begin{align*}
F_2'(x) & = \log x - 2 \log(x-n+1) + \log \log x - 2 \log \log(x-n+1) + \frac{1}{\log x} + \frac{\log \log x-2}{\log x} \\
& \p{\q\q} - \frac{(\log \log x)^2 - 4 \log \log x + 5.508}{2\log^2x} + \frac{(\log \log x)^2 - 7 \log \log x + 14.508}{\log^3x} \\
& \p{\q\q} - \frac{2(\log \log(x-n+1)-1)}{\log(x-n+1)} + \frac{(\log \log(x-n+1))^2 - 4 \log \log(x-n+1) + 4.667}{2\log^2(x-n+1)} \\
& \p{\q\q} - \frac{2(\log \log(x-n+1))^2 - 14 \log \log(x-n+1) + 27.334}{\log^3(x-n+1)}.
\end{align*}
Now we argue as in the proof of Proposition \ref{prop302} to obtain that the inequality
\begin{displaymath}
F_2'(x) < 1.717 - \log n + \log \log(2.6n) - \log(\log^2 n)
\end{displaymath}
holds for every real $x$ such that $2n < x < 2.6n$. Since $t\log^2t > e^{1.717}\log(2.6t)$ for every $t \geq 6$, we get that $F_2(x)$ is 
a strictly decreasing function on the interval $(2n, 2.6n)$.
\end{proof}

Now, we define the function $\delta : \R_{\geq 4} \to \R$ by
\begin{equation}
\delta(x) = \frac{\log 2 + \log 2/\log x - (1.472\log \log x + 2.51)/\log^2x}{\log x + \log \log x - \log 2 - \log 2/\log x}. \tag{4.2} \label{4.2}
\end{equation}
A simple calculation shows that
\begin{equation}
\delta(x) = \frac{\log 2}{\log x} - \frac{\log 2 \log \log x - \log^2 2 - \log 2}{\log^2 x} + L(x), \tag{4.3} \label{4.3}
\end{equation}
where $L(x)$ is given by \eqref{1.9}. In the following lemma, we note two properties of the function $\delta(x)$, which will be useful in the proof of Theorem 
\ref{thm103}.

\begin{lem} \label{lem403}
Let $\delta(x)$ be defined as in \eqref{4.2}. Then the following two inequalities hold:
\begin{enumerate}
 \item[(a)] $\delta(x) > 0.638/\log x$ for every $x \geq \exp(\exp(3))$,
 \item[(b)] $\delta(x) < \log 2/ \log x$ for every $x \geq 230$.
\end{enumerate}
\end{lem}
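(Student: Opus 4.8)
The plan is to prove Lemma \ref{lem403} by direct analysis of the explicit function $\delta(x)$ defined in \eqref{4.2}, in complete analogy with the proof of Lemma \ref{lem303}. The key observation is that the denominator $D(x) = \log x + \log \log x - \log 2 - \log 2/\log x$ is positive and increasing for $x \geq 4$, so both inequalities amount to comparing the numerator of $\delta(x)$ with a suitable multiple of $D(x)$ divided by $\log x$.

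For part (b), I would first use the identity \eqref{4.3}, which expresses $\delta(x) = \log 2/\log x - (\log 2 \log \log x - \log^2 2 - \log 2)/\log^2 x + L(x)$. As in the proof of Lemma \ref{lem303}(b), the idea is to bound $L(x)$ from above: since the numerator of $L(x)$ in \eqref{1.9} has leading term $\log 2 \log x (\log \log x)^2$ and the denominator has leading term $\log^4 x$, one gets $L(x) \leq \log 2 (\log \log x)^2/\log^3 x$ for all sufficiently large $x$ (a bound of the same shape as the one used for $U(x)$, valid for $x \geq 230$ after checking the lower-order terms do not spoil it). Then it suffices to show $\log 2 (\log \log x)^2/\log^3 x \leq (\log 2 \log \log x - \log^2 2 - \log 2)/\log^2 x$, i.e. $\log 2 (\log \log x)^2 \leq (\log 2 \log \log x - \log^2 2 - \log 2)\log x$, which holds for $x \geq 230$ because at $x = 230$ one has $\log \log x > 1 + \log 2$ so the right side is positive and grows faster. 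The threshold $230$ should be confirmed by direct numerical check at the boundary.

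For part (a), I would again work from \eqref{4.2} directly. Since $D(x) < \log x + \log \log x$ for $x \geq 4$, it suffices to show that the numerator $N(x) = \log 2 + \log 2/\log x - (1.472 \log \log x + 2.51)/\log^2 x$ satisfies $N(x) > 0.638 (\log x + \log \log x)/\log x = 0.638(1 + \log \log x/\log x)$. Rearranging, this becomes $(\log 2 - 0.638) + (\log 2 - 0.638 \log \log x)/\log x - (1.472 \log \log x + 2.51)/\log^2 x > 0$. Since $\log 2 \approx 0.6931 > 0.638$, the constant term is positive; the remaining terms are $O(\log \log x/\log x)$ and $O(\log \log x/\log^2 x)$, so for $x \geq \exp(\exp(3))$ (where $\log \log x \geq 3$ and $\log x \geq e^3 \approx 20.09$) one checks that the correction terms are dominated by the positive constant. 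The cleanest route is to verify the inequality is equivalent to $t \mapsto$ (some explicit rational function in $u = \log x \geq e^3$ and $v = \log \log x \geq 3$) being positive, using that $v \leq \log u$ and that $u \geq e^3$ — then the verification reduces to monotonicity and a single boundary evaluation at $x = \exp(\exp(3))$.

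The main obstacle is purely bookkeeping: pinning down the exact constants in the intermediate bound $L(x) \leq \log 2 (\log \log x)^2/\log^3 x$ and confirming it holds already at $x = 230$ rather than at some larger threshold, since $L(x)$ has several negative lower-order terms ($-d_1 \log x \log \log x$, $-\log^2 2 \log \log x$) in its numerator that actually help, but also positive ones ($d_2 \log x$, $\log^3 2 + \log^2 2$) that must be controlled; here $d_2 = \log^3 2 + 2\log^2 2 - 2.51 < 0$ actually, which is favorable. One should double-check the sign of $d_2$ and exploit it. Beyond that, the argument is a routine chain of elementary estimates with the thresholds verified by direct computation, exactly mirroring Lemma \ref{lem303}.
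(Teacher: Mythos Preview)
Your approach for part (a) has a genuine numerical gap. Replacing the denominator $D(x)=\log x+\log\log x-\log 2-\log 2/\log x$ by the overestimate $\log x+\log\log x$ throws away exactly the margin you need at the boundary $x=\exp(\exp(3))$. At that point $\log x=e^{3}\approx 20.086$, $\log\log x=3$, and your proposed sufficient condition reads
\[
(\log 2-0.638)+\frac{\log 2-0.638\cdot 3}{e^{3}}-\frac{1.472\cdot 3+2.51}{e^{6}}\approx 0.0551-0.0608-0.0172<0,
\]
so it fails. The constant $0.638$ is chosen so tightly that the $-\log 2$ in $D(x)$ is essential: the paper keeps the full denominator, rewrites $\delta(x)>0.638/\log x$ as the \emph{equivalent} inequality
\[
(\log 2-0.638)\log x+(1+0.638)\log 2-\frac{1.472\log\log x+2.51-0.638\log 2}{\log x}>0.638\log\log x,
\]
and this holds (barely) at $x=\exp(\exp(3))$, then for larger $x$ by monotonicity of $\log\log x/\log x$.

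For part (b) your route via \eqref{4.3} and a bound on $L(x)$ also runs into trouble at the stated threshold. The chain $L(x)\le \log 2(\log\log x)^{2}/\log^{3}x\le(\log 2\log\log x-\log^{2}2-\log 2)/\log^{2}x$ requires $(\log\log x-1-\log 2)\log x\ge(\log\log x)^{2}$; at $x=230$ one has $\log\log x-1-\log 2\approx 3\cdot 10^{-4}$, so the second inequality fails badly (this is precisely why the analogous Lemma~\ref{lem303}(b) is stated only for $x\ge 10\,734$). The paper instead works directly from \eqref{4.2}: multiplying out shows $\delta(x)<\log 2/\log x$ is equivalent to $\log 2+\log^{2}2<\log 2\log\log x+(1.472\log\log x+2.51-\log^{2}2)/\log x$, which follows from $\log\log x>1+\log 2$, i.e.\ $x\ge\exp(\exp(1+\log 2))\approx 229.6$. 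Working from the definition \eqref{4.2} rather than the decomposition \eqref{4.3} avoids all the slack and gives the sharp thresholds in both parts.
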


\begin{proof}
Since $0.055 \log x + 0.812 > 0.638 \log \log x$ for every $x \geq 4.71 \cdot 10^8$, it follows that the inequality
\begin{displaymath}
(\log 2 - 0.638)\log x + (1+0.638)\log 2 - \frac{1.472 \cdot 3 + 2.51 - 0.638\log 2}{e^3} > 0.638 \log \log x
\end{displaymath}
holds for every $x \geq 4.71 \cdot 10^8$. The function $t \mapsto \log \log t/ \log t$ is decreasing for $x \geq e^e$. Hence
\begin{displaymath}
(\log 2 - 0.638)\log x + (1+0.638)\log 2 - \frac{1.472 \log \log x + 2.51 - 0.638\log 2}{\log x} > 0.638 \log \log x
\end{displaymath}
for every $x \geq \exp(\exp(3))$. Now it suffices to note that the last inequality is equivalent to $\delta(x) > 0.638/\log x$. This proves (a). Next, we prove 
(b). Since $\log 2 \log \log x > \log 2 + \log^22$ for every $x \geq 230 \geq \exp(\exp(1+\log 2))$, we obtain that the inequality
\begin{displaymath}
\log 2 + \log^2 2 < \log 2 \log \log x + \frac{1.472  + 2.51 - \log^2 2}{\log x}
\end{displaymath}
holds for $x \geq 230$. Again, it suffices to note that the last inequality is equivalent to $\delta(x) < \log 2/\log x$.
\end{proof}

Finally, we give the proof of Theorem \ref{thm103}.

\begin{proof}[Proof of Theorem \ref{thm103}]
First, we consider the case where $n$ is a positive integer with $n \geq 528\,491\,312 \geq \exp(\exp(3))$. By \eqref{1.3} and \eqref{1.6}, we have $2n < 
\pi(R_n) < 2.6n$. Further, $\pi(R_n) > 2n \geq 1\,056\,982\,624$ and $\pi(R_n)-n > 528\,491\,312$. Applying Lemma \ref{lem201} and Lemma \ref{lem202}, we get 
$F_2(\pi(R_n)) < p_\pi(R_n) - 2 p_{\pi(R_n)-n+1}$, where $F_2$ is defined as in \eqref{4.1}. Note that $R_n = p_{\pi(R_n)}$. Hence, by Lemma \ref{lem401}, we 
get
\begin{equation}
F_2(\pi(R_n)) < p_{\pi(R_n)} - 2 p_{\pi(R_n)-n+1} < 0. \tag{4.4} \label{4.4}
\end{equation}
In the following, we use, for convenience, the notation $\delta = \delta(n)$ and write $\beta = 2n(1+ \delta)$. So, by \eqref{4.3}, we need to prove that $\beta 
< \pi(R_n)$. For this purpose, we first show that $F_2(\beta) > 0$. From Lemma \ref{lem403}, it follows that $2n < \beta < 2.6n$. Furthermore, we have
\begin{equation}
\frac{F_2(\beta)}{2n} = (1+ \delta)\log 2 - \delta\log n - \frac{\log n}{n} + \delta + \frac{1}{n} + A_2 + B_2 + C_2 + D_2 - 
\frac{0.841(1+\delta)}{2\log^2(2n+2n\delta)}, \tag{4.5} \label{4.5}
\end{equation}
where the quantities $A_2$, $B_2$, $C_2$ and $D_2$ are given by
\begin{align*}
A_2 & = (1+ \delta)\log(1+ \delta) - \left(1+2\delta + \frac{1}{n} \right)\log \left( 1+2\delta + \frac{1}{n} \right), \\
B_2 & = (1+ \delta)\log \log(2n+ 2n\delta) - \left( 1+2\delta + \frac{1}{n} \right) \log \log (n+2n\delta + 1), \\
C_2 & = (1+ \delta) \frac{\log\log(2n+2n\delta) - 2}{\log(2n+2n\delta)} - \left( 1+2\delta + \frac{1}{n} \right)\frac{\log\log(n+2n\delta+1) - 
2}{\log(n+2n\delta + 1)}, \\
D_2 & = - (1+ \delta) \frac{(\log \log(2n+2n\delta))^2 - 6 \log \log(2n+2n\delta) + 10.667}{2 \log^2(2n+2n\delta)} \\ 
& \p{\q\q} + \left( 1+2\delta + \frac{1}{n} \right) \frac{(\log \log(n+2n\delta+1))^2 - 6 \log \log(n+2n\delta+1) + 10.667}{2 \log^2(n+2n\delta+1)}.
\end{align*}
To show that $F_2(\beta) > 0$, we give in the following some lower bounds for the quantities $A_2$, $B_2$, $C_2$ and $D_2$. To find a lower bound for $A_2$, we 
consider the function $f: (0, \infty) \to \R$ defined by $f(x) = x \log x$. Then $A_2 = f(1+\delta) - f(1+2\delta + 1/n)$. By the mean value theorem, there 
exists $\xi \in (1+\delta, 1+2\delta + 1/n)$, so that $A_2 = - (\delta+1/n)(\log \xi + 1)$. Since $\log \xi \leq \log (1+2\delta + 1/n) \leq 2\delta + 1/n$, we 
get
\begin{displaymath}
A_2 \geq - \delta - 2\delta^2 - \frac{1}{n} \left( 1 + 3 \delta + \frac{1}{n} \right).
\end{displaymath}
Applying Lemma \ref{lem403}(b) to the last inequality, we obtain that
\begin{equation}
A_2 \geq - \delta - \frac{2\log^22}{\log^2 n} - \frac{1}{\log^2n} \frac{(1+3\delta + 1/n) \log^2n}{n} \geq - \delta - \frac{0.961}{\log^2 n}. \tag{4.6} 
\label{4.6}
\end{equation}
Our next goal is to estimate $B_2$. For this purpose, we use the right-hand side inequality of \eqref{3.8}, Lemma \ref{lem403}(b) and the inequality 
$1/(x\log x) < 0.0037/\log^2x$, which holds for every $x \geq 2036$, to get
\begin{equation}
\log \log (n+2n\delta+1) 
< \log \log n + \frac{2\log 2}{\log^2 n} + \frac{1}{n \log n} < \log \log n + \frac{1.39}{\log^2 n}. \tag{4.7} \label{4.7}
\end{equation}
On the other hand, we have
\begin{displaymath}
\log \log (2n+2n\delta) = \log \log n + \log \left( 1 + \frac{\log 2 + \log(1+\delta)}{\log n} \right).
\end{displaymath}
Applying the left-hand side inequality of \eqref{3.8}, we obtain that
\begin{displaymath}
\log \log (2n+2n\delta) \geq \log \log n + \frac{\log 2}{\log n} + \frac{\log(1+\delta)}{\log n} - \frac{(\log2 + \log(1+\delta))^2}{2\log^2n}.
\end{displaymath}
Combined with
\begin{displaymath}
(\log2 + \log(1+\delta))^2 \leq ( \log2 + \delta)^2 \leq \left( \log2 + \frac{\log 2}{\log n} \right)^2 \leq 0.53,
\end{displaymath}
it follows that the inequality
\begin{displaymath}
\log \log (2n+2n\delta) \geq \log \log n + \frac{\log 2}{\log n} + \frac{\log(1+\delta)}{\log n} - \frac{0.265}{\log^2n}
\end{displaymath}
holds. Again, we use the left-hand side inequality of \eqref{3.8} to establish
\begin{displaymath}
\log \log (2n+2n\delta) \geq \log \log n + \frac{\log 2}{\log n} + \frac{\delta - \delta^2/2}{\log n} - \frac{0.265}{\log^2n}.
\end{displaymath}
Now we apply Lemma \ref{lem403}(a) and Lemma \ref{lem403}(b) to obtain that
\begin{displaymath}
\log \log (2n+2n\delta) \geq \log \log n + \frac{\log 2}{\log n} + \frac{0.361}{\log^2 n}.
\end{displaymath}
Together with the definition of $B_2$ and \eqref{4.6}, we get
\begin{displaymath}
B_2  \geq - \delta \log \log n + \frac{(1+\delta)\log 2}{\log n} - \frac{\log \log n}{n} - \frac{1.029 + 2.419\delta}{\log^2n} - \frac{1.39}{n\log^2 n}.
\end{displaymath}
Finally, we use a computer and Lemma \ref{lem403}(b) to get
\begin{equation}
B_2  \geq - \delta \log \log n + \frac{(1+\delta )\log 2}{\log n} - \frac{1.113}{\log^2 n}. \tag{4.8} \label{4.8}
\end{equation}
Next, we find an lower bound for $C_2$. For this, we apply the inequality
\begin{displaymath}
\frac{2(1+2\delta+1/n)}{\log(n+2n\delta+1)} \geq \frac{2(1+\delta)}{\log(2n+2n\delta)}
\end{displaymath}
to the definition of $C_2$ to get
\begin{displaymath}
C_2 \geq (1+\delta) \frac{\log \log (2n+2n\delta)}{\log(2n+2n\delta)} - \left( 1+2\delta + \frac{1}{n} \right) \frac{\log \log 
(n+2n\delta+1)}{\log(n+2n\delta+1)}.
\end{displaymath}
We use $2n+2n\delta \geq n+2n\delta+1 \geq n$ to obtain that the inequality
\begin{displaymath}
C_2 \geq -\log \log (n+2n\delta+1) \frac{(\delta + 1/n)\log n + (1+2\delta+1/n)(\log 2 + \log(1+\delta))}{\log(2n+2n\delta)\log(n+2n\delta+1)}
\end{displaymath}
holds. Applying the right-hand side inequality of \eqref{3.8} and Lemma \ref{lem403}(b) to the last inequality, we get
\begin{displaymath}
C_2 \geq -\log \log (n+2n\delta+1) \frac{(\log 2/\log n + 1/n)\log n + (1+2\log2/\log n+1/n)(\log 2 + \log2/\log n)}{\log(2n+2n\delta)\log(n+2n\delta+1)}.
\end{displaymath}
A computation shows that
\begin{displaymath}
\left( 1 + \frac{2\log2}{\log n} + \frac{1}{n} \right) \left( \log 2 + \frac{\log2}{\log n} \right) \leq 0.778.
\end{displaymath}
Hence
\begin{displaymath}
C_2 \geq -\frac{( \log 2 + 0.778) \log \log (n+2n\delta+1)}{\log(2n+2n\delta)\log(n+2n\delta+1)} - \frac{\log n \log 
\log(n+2n\delta+1)}{n\log(2n+2n\delta)\log(n+2n\delta+1)}.
\end{displaymath}
Note that the function $t \mapsto \log \log t/\log t$ is a decreasing function for every $t > e^e$, we obtain that
\begin{equation}
C_2 \geq -\frac{( \log 2 + 0.778) \log \log n}{\log^2n} - \frac{\log \log n}{n\log n} \geq - \frac{1.472\log \log n}{\log^2 n}. \tag{4.9} \label{4.9}
\end{equation}
Finally, we estimate $D_2$. For this purpose, we consider the function $f : (1, \infty) \to \R$ defined by
\begin{displaymath}
f(x) = \frac{(\log \log x)^2 - 6 \log \log x + 10.667}{2 \log^2 x}.
\end{displaymath}
Note that $f(x)$ is a strictly decreasing function on the interval $(1,\infty)$ and the numerator of $f(x)$ is positive for every real $x > 1$. Together with 
$2n+2n\delta \geq n+2n\delta + 1 \geq n$, we get
\begin{equation}
D_2 \geq \left( \delta + \frac{1}{n} \right) \frac{(\log \log n)^2 - 6 \log \log n + 10.667}{2 \log^2 n} > 0. \tag{4.10} \label{4.10}
\end{equation}
Finally, we combine \eqref{4.5} with \eqref{4.6} and \eqref{4.8}-\eqref{4.10} to get that the inequality
\begin{align*}
\frac{F_2(\beta)}{2n} & >
(1+ \delta)\left( \log 2 + \frac{\log 2}{\log n} \right)- \delta\log n - \frac{\log n - 1}{n} - \frac{1.472\log \log n + 2.4945}{\log^2 n} - \delta \log 
\log n - \frac{0.841\delta}{2\log^2n} \\
& \geq 
\delta \left( - \log n - \log \log n + \log 2 + \log 2/\log n \right) + \log 2 - \frac{1.472\log \log n + 2.51}{\log^2 n} + \frac{\log 2}{\log n}
\end{align*}
holds. Now it suffices to use \eqref{4.2} to get that the right-hand side of the last inequality is equal to $0$ and it follows that $F_2(\beta) > 0$. Together 
with $2n < \pi(R_n), \beta < 2.6n$, the inequality \eqref{4.4} and Proposition \ref{prop402}, we obtain that $\pi(R_n) > \beta$ for every positive integer $n 
\geq 528\,491\,312$. We conclude by direct computation.
\end{proof}

Since $L(x) \geq 0$ for every $x \geq 10^{57}$, we use Theorem \ref{thm103} to get the following weaker but more compact lower bound for $\pi(R_n)$.

\begin{kor}
Let $n$ be a positive integer satisfying $n \geq 10^{57}$. Then
\begin{displaymath}
\pi(R_n) > 2n \left( 1 + \frac{\log 2}{\log n} - \frac{\log 2 \log \log n - \log^22 - \log 2}{\log^2n} \right).
\end{displaymath}
\end{kor}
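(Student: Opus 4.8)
The plan is to read this corollary straight off Theorem \ref{thm103}. Since $10^{57} \geq 1\,245$, Theorem \ref{thm103} applies to every $n \geq 10^{57}$ and gives
\[
\pi(R_n) > 2n\left(1 + \frac{\log 2}{\log n} - \frac{\log 2 \log \log n - \log^2 2 - \log 2}{\log^2 n} + L(n)\right),
\]
so the asserted inequality is immediate once we know that $L(n) \geq 0$ on this range. Thus the entire proof reduces to checking that $L(x) \geq 0$ for every $x \geq 10^{57}$, with $L(x)$ as in \eqref{1.9}.

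For the denominator of $L(x)$ I would factor it as $\log^2 x\bigl(\log^2 x + \log x \log\log x - \log x \log 2 - \log 2\bigr)$; for $x \geq 10^{57}$ one has $\log x > 130$ and $\log\log x > 0$, so $\log^2 x + \log x(\log\log x - \log 2) - \log 2 > 0$ and hence the denominator is positive throughout the range. It then remains to prove that the numerator $N(x)$ of $L(x)$ is nonnegative there.

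To do this I would pass to the variable $v = \log\log x$, so that $\log x = e^v$ and
\[
N(x) = e^v\bigl(\log 2\, v^2 - d_1 v + d_2\bigr) - \log^2 2\, v + \log^3 2 + \log^2 2 =: \widetilde N(v),
\]
where $d_1 = 2\log^2 2 + \log 2 + 1.472$ and $d_2 = \log^3 2 + 2\log^2 2 - 2.51$. Set $q(v) = \log 2\, v^2 - d_1 v + d_2$. A quick look at the roots of $q$ shows that $q(v) \geq 0$ for $v \geq 4.871$, while $q'(v) = 2\log 2\, v - d_1 > 0$ on the same range, so $\widetilde N'(v) = e^v\bigl(q(v) + q'(v)\bigr) - \log^2 2 > 0$ for $v \geq 4.871$. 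Since $x \geq 10^{57}$ forces $v = \log\log x \geq \log(57\log 10) > 4.877$, the function $\widetilde N$ is increasing on the relevant interval, so $N(x) \geq \widetilde N\bigl(\log(57\log 10)\bigr)$, and a direct numerical evaluation at this endpoint gives $\widetilde N\bigl(\log(57\log 10)\bigr) > 0$. Combined with the positivity of the denominator this yields $L(x) \geq 0$ for all $x \geq 10^{57}$, completing the proof.

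I expect the only subtle point to be the endpoint estimate: the larger root of $q$ is roughly $4.8703$ while $\log\log(10^{57})$ is only about $4.877$, so the inequalities $q\bigl(\log(57\log 10)\bigr) > 0$ and $\widetilde N\bigl(\log(57\log 10)\bigr) > 0$ have to be checked with a couple of guard digits to be fully rigorous; everything else is routine monotonicity and arithmetic.
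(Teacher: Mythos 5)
Your approach matches the paper's exactly: the paper states (in the sentence just before the corollary, and then gives no proof text) that $L(x) \geq 0$ for $x \geq 10^{57}$ and that the corollary therefore follows from Theorem \ref{thm103}. You have simply filled in the verification that $L(x) \geq 0$ on this range, which the paper asserts without detail; your substitution $v=\log\log x$, the factorization of the denominator, the monotonicity of $\widetilde N(v)$, and the endpoint check at $v = \log(57\log 10)$ are all sound (the larger root of $q$ is $\approx 4.8702$ while $\log\log(10^{57}) \approx 4.8770$, so the margin is tight but real, exactly as you flag).
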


In the next corollary, we use Theorem \ref{thm103} to find that the lower bound for $\pi(R_n)$ given in Proposition \ref{prop101} also holds for every positive 
integer 
$n$ satisfying $51\,396\,214\,158\,824 \leq n \leq 10^{300}$.

\begin{kor}
Let $n$ be a positive integer satisfying $n \geq 51\,396\,214\,158\,824$. Then
\begin{displaymath}
\pi(R_n) > 2n \left( 1 + \frac{\log 2}{\log n} - \frac{\log 2 \log \log n - \log^22 - \log 2 + 0.11}{\log^2n} \right).
\end{displaymath}
\end{kor}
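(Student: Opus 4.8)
The plan is to deduce the corollary from Theorem~\ref{thm103}, exactly in the way Corollary~\ref{kor305} was deduced from Theorem~\ref{thm102}. Since $51\,396\,214\,158\,824 \geq 1\,245$, Theorem~\ref{thm103} applies to every integer $n$ in the asserted range and gives
\[
\pi(R_n) > 2n \left( 1 + \frac{\log 2}{\log n} - \frac{\log 2 \log \log n - \log^22 - \log 2}{\log^2 n} + L(n) \right),
\]
so it suffices to show that $L(x) \geq -0.11/\log^2 x$ for every $x \geq 51\,396\,214\,158\,824$; the strictness already present in Theorem~\ref{thm103} makes it harmless to allow equality in this estimate.

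To prove this bound on $L$, I would use that the denominator of $L(x)$ in \eqref{1.9} equals $\log^2 x\,(\log^2 x + \log x \log \log x - \log x \log 2 - \log 2)$, which is increasing in $x$ and already positive far below $5 \cdot 10^{13}$, hence positive throughout the range; writing $L(x) + 0.11/\log^2 x$ over this common denominator reduces the claim to the nonnegativity of the numerator
\[
\Phi(x) = 0.11 \log^2 x + \log 2\, \log x (\log \log x)^2 + (0.11 - d_1)\log x \log \log x + (d_2 - 0.11\log 2)\log x - \log^2 2\, \log \log x + \log^3 2 + \log^2 2 - 0.11\log 2
\]
for $x \geq 51\,396\,214\,158\,824$, where $d_1$ and $d_2$ are the constants of Theorem~\ref{thm103}. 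Substituting $u = \log x$ and $\log \log x = \log u$ turns $\Phi$ into a function $\varphi(u)$ of the single variable $u$, and the goal becomes to show $\varphi(u) \geq 0$ for all $u \geq u_0$, where $u_0 = \log(51\,396\,214\,158\,824)$.

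I would then show that $\varphi$ is strictly increasing on $[u_0,\infty)$: from
\[
\varphi'(u) = 0.22 u + \log 2 (\log u)^2 + (2\log 2 + 0.11 - d_1)\log u + (0.11 - d_1 + d_2 - 0.11\log 2) - \frac{\log^2 2}{u}
\]
one sees that the only term pulling $\varphi'$ down is the one linear in $\log u$ (as $2\log 2 + 0.11 - d_1 < 0$), and it is dominated by $0.22 u$ once $u \geq u_0$, much as in the estimates of $\log x$ against $\log \log x$ in the proof of Lemma~\ref{lem403}; together with $\varphi'(u_0) > 0$ and $\varphi''(u) > 0$ on $[u_0,\infty)$ this gives $\varphi' > 0$ there. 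Since $\log$ is increasing, it remains only to check $\varphi(u_0) \geq 0$ by direct computation, and then the corollary follows. This last numerical check is the main obstacle: the value $51\,396\,214\,158\,824$ is essentially the exact crossover point of $L(x) = -0.11/\log^2 x$, so $\varphi(u_0)$ is extremely small and the sign must be confirmed with sufficient precision (by computer); the constant $0.11$ cannot be improved without enlarging the threshold. Everything else is routine calculus of $\varphi$ combined with the already-established Theorem~\ref{thm103}.
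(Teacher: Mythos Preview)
Your proposal is correct and follows exactly the paper's approach: apply Theorem~\ref{thm103} and reduce to the inequality $L(x)\geq -0.11/\log^2 x$ for $x\geq 51\,396\,214\,158\,824$. The paper simply asserts this last inequality as a fact (to be checked numerically), whereas you spell out a calculus verification via $\Phi$ and $\varphi$; the extra analysis is sound but not required.
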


\begin{proof}
The claim follows directly by Theorem \ref{thm103} and the fact that $L(x) \geq - 0.11/\log^2x$ for every $x \geq 51\,396\,214\,158\,824$.
\end{proof}

Finally, we give the following result concerning a lower bound for $\pi(R_n)$.

\begin{kor}
Let $n$ be a positive integer satisfying $n \geq 85$. Then
\begin{displaymath}
\pi(R_n) > 2n \left( 1 + \frac{\log 2}{\log n} - \frac{\log 2 \log \log n}{\log^2n} \right).
\end{displaymath}
\end{kor}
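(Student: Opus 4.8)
The plan is to derive the corollary from Theorem~\ref{thm103}. Since the bound asserted here is weaker than the one in Theorem~\ref{thm103}, for every $n \geq 1\,245$ it suffices to show that the two correction terms occurring in Theorem~\ref{thm103} are together nonnegative, i.e.\ that
\[
\frac{\log^2 2 + \log 2}{\log^2 n} + L(n) \geq 0,
\]
equivalently, in view of \eqref{4.3} (or of the closed form \eqref{4.2}), that $\delta(n) \geq \log 2/\log n - \log 2\log\log n/\log^2 n$. The remaining range $85 \leq n \leq 1\,244$, in which Theorem~\ref{thm103} is not available, I would settle by direct computation, using the known values of $R_n$ (hence of $\pi(R_n)$) there.

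For the main range I would set $a = \log n$ and $b = \log\log n$, and first note that the denominator in \eqref{1.9}, namely $\log^2 n\bigl(\log^2 n + \log n(\log\log n - \log 2) - \log 2\bigr)$, is positive for all $n \geq 1\,245$. Clearing denominators in $L(n) \geq -(\log^2 2 + \log 2)/\log^2 n$ and substituting $d_1 = 2\log^2 2 + \log 2 + 1.472$ and $d_2 = \log^3 2 + 2\log^2 2 - 2.51$, a short computation, in which the constant terms cancel, reduces the claim to the polynomial inequality
\[
(\log^2 2 + \log 2)\,a^2 + \log 2\cdot a\,b^2 - (\log^2 2 + 1.472)\,a\,b + (\log^2 2 - 2.51)\,a - \log^2 2\cdot b \geq 0.
\]
To finish I would bound the left-hand side from below: the quadratic $\log 2\,b^2 - (\log^2 2 + 1.472)b$ in $b$ is at least $-(\log^2 2 + 1.472)^2/(4\log 2) > -1.38$, and the bound $b = \log\log n \leq \log n = a$ lets us absorb the term $-\log^2 2\cdot b$; the left-hand side is then at least $a\bigl((\log^2 2 + \log 2)a - 3.89\bigr) + \log^2 2 \geq 1.17\,a^2 - 3.89\,a + 0.48$, which is nonnegative as soon as $a \geq 3.2$, in particular for every $n \geq 1\,245$.

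The step I expect to be most delicate is the algebraic reduction to the displayed polynomial inequality together with the choice of a lower bound valid uniformly for all $n$ in the range rather than merely asymptotically; once the inequality is in the stated form, the estimate is elementary. Combining the two ranges then gives the result. (For very large $n$ the argument is in fact trivial, since $L(n) \geq 0$ once $n \geq 10^{57}$.)
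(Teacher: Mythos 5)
Your proposal follows the same route as the paper: show that $L(n) + (\log^2 2 + \log 2)/\log^2 n \geq 0$ on the range where Theorem~\ref{thm103} applies (the paper asserts this for every $x \geq 20$ and cites it without proof), then invoke Theorem~\ref{thm103} for $n \geq 1\,245$ and finish $85 \leq n \leq 1\,244$ by direct computation. Your algebraic reduction to the polynomial inequality in $a=\log n$, $b=\log\log n$ is correct (the constant terms $\log^3 2 + \log^2 2$ do cancel, and the coefficients $-(\log^2 2 + 1.472)$ and $\log^2 2 - 2.51$ are right), and the completed-square bound $\log 2\,b^2 - (\log^2 2 + 1.472)b \geq -(\log^2 2 + 1.472)^2/(4\log 2) > -1.38$ is fine. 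One bookkeeping slip: after using $b \leq a$ to absorb $-\log^2 2\cdot b \geq -\log^2 2\cdot a$, the $\log^2 2$ in the coefficient $(\log^2 2 - 2.51)$ of $a$ is cancelled, so the lower bound is $a\bigl((\log^2 2 + \log 2)a - 3.89\bigr)$ with no surviving $+\log^2 2$ constant; this is nonnegative once $a \geq 3.89/(\log^2 2 + \log 2) \approx 3.33$, which still holds comfortably for $n \geq 1\,245$ (where $a > 7.1$), so the conclusion is unaffected.
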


\begin{proof}
Since $L(x) + (\log^22 + \log 2)/\log^2x \geq 0$ for every $x \geq 20$, we apply Theorem \ref{thm103} to get the correctness of the corollary for every 
positive integer $n \geq 1\,245$. We conclude by direct computation.
\end{proof}

\section{Proof of Theorem \ref{thm107}}

In this section we give a proof of Theorem \ref{thm107} by using Theorem 3.22 of \cite{axler2016}. For this, we need to introduce the following notations. By 
\cite[Corollary 3.4 and Corollary 3.5]{axler20162}, we have
\begin{equation}
\frac{x}{\log x - 1 - \frac{1}{\log x}} < \pi(x) < \frac{x}{\log x - 1 - \frac{1.17}{\log x}}, \tag{5.1} \label{5.1}
\end{equation}
where the left-hand side inequality is valid for every $x \geq 468\,049$ and the right-hand side inequality holds for every $x \geq 5.43$. Using the
right-hand side inequality of \eqref{5.1}, we get $p_n > n(\log p_n - 1 - 1.17/\log p_n)$ for every positive integer $n$. In addition, we set $\e > 0$ and 
$\lambda = \e/2$. Let $S = S(\e)$ be defined by
\begin{displaymath}
S = \exp \left( \sqrt{ 1.17 + \frac{2(1+\e)}{\e} \left( 0.17 + \frac{\log 2}{\log(2 \cdot 5.43)}\right) + \left( \frac{1}{2} + \frac{(1+\e)\log 2}{\e} 
\right)^2} + \frac{1}{2} + \frac{(1+\e)\log 2}{\e} \right)
\end{displaymath}
and let $T = T(\e)$ be defined by $T = \exp(1/2 + \sqrt{1.17 + 0.17/\lambda + 1/4})$. By setting $X_9 = X_9(\e) = \max \{ 468\,049, 2S, T\}$, we get 
the following result.

\begin{lem} \label{lem501}
Let $\e > 0$. For every positive integer $n$ satisfying $n \geq (\pi(X_9) + 1)/(2(1+ \e))$, we have
\begin{displaymath}
R_n \leq p_{\lceil 2(1+ \e)n \rceil}.
\end{displaymath}
\end{lem}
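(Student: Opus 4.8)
The plan is to show that $p_{\lceil 2(1+\e)n \rceil}$ is large enough that the interval $(x/2, x]$ contains at least $n$ primes for every $x \geq p_{\lceil 2(1+\e)n \rceil}$; by the definition of the $n$th Ramanujan prime this gives $R_n \leq p_{\lceil 2(1+\e)n \rceil}$. Write $k = \lceil 2(1+\e)n \rceil$, so that $k \geq 2(1+\e)n$. It suffices to prove that $\pi(x) - \pi(x/2) \geq n$ for all $x \geq p_k$. Since $\pi(x) - \pi(x/2)$ is not monotone, the standard route (going back to Srinivasan and used by Yang--Togb\'e) is to reduce this to an inequality at the prime arguments: it is enough to show $\pi(p_k) - \pi(p_k/2) \geq n$ together with a comparison that controls the drop between consecutive primes, i.e.\ it suffices that $p_{k-n} < p_k/2$, equivalently $2p_{k-n} < p_k$, or better $2p_{\lceil 2(1+\e)n\rceil - n + 1} \le p_{\lceil 2(1+\e)n\rceil}$ so that for any $x \in [p_j, p_{j+1})$ with $j \ge k$ one still captures $n$ primes in $(x/2,x]$. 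So the core inequality to establish is
\begin{displaymath}
2\,p_{\lceil 2(1+\e)n \rceil - n + 1} \leq p_{\lceil 2(1+\e)n \rceil}.
\end{displaymath}

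To prove this I would use the two-sided bounds coming from \eqref{5.1}. The right-hand inequality of \eqref{5.1} gives $p_m > m(\log p_m - 1 - 1.17/\log p_m)$ for every $m$, as already noted in the excerpt; applying the left-hand inequality of \eqref{5.1} (valid for $x \geq 468\,049$, which is why $468\,049$ enters $X_9$) one likewise gets an upper bound $p_m < m(\log p_m - 1 - 1/\log p_m)$-type estimate once $m$ is large enough that $p_m \geq 468\,049$. Combining these, $2p_{k-n+1} \le p_k$ will follow once
\begin{displaymath}
2(k-n+1)\left(\log p_{k-n+1} - 1 - \tfrac{1.17}{\log p_{k-n+1}}\right) \;\le\; k\left(\log p_k - 1 - \tfrac{1}{\log p_k}\right),
\end{displaymath}
and here I would bound $k - n + 1 \le k/2 \cdot \tfrac{1}{1+\e}\cdot(\text{something})$ using $k \geq 2(1+\e)n$, so that the factor $2(k-n+1)$ is comparable to $k/(1+\e) < k$, leaving a positive slack proportional to $\e$ which must dominate the difference $\log p_k - \log p_{k-n+1} \le \log 2 + O(1/\log p_{k-n+1})$ between the two logarithmic factors. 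This is precisely where the quantities $0.17 + \log 2/\log(2\cdot 5.43)$, $\log 2/\e$, and the completed square inside $S$ come from: one is solving a quadratic in $\log p_{k-n+1}$ (or in $\log S$) to pin down the threshold beyond which the slack $\e/(1+\e)$ beats the log-gap. The auxiliary threshold $T = T(\e)$ handles, by the same quadratic-in-$\log$ device with $\lambda = \e/2$, the separate requirement that $\pi(p_k) - \pi(p_k/2)$ genuinely reaches $n$ (rather than just the comparison of consecutive primes), and $X_9 = \max\{468\,049, 2S, T\}$ is the common cutoff making all three inputs simultaneously valid.

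Finally I would translate the condition ``$p_k$ large enough'' back into a condition on $n$: the hypothesis $n \geq (\pi(X_9)+1)/(2(1+\e))$ forces $2(1+\e)n \geq \pi(X_9)+1$, hence $k = \lceil 2(1+\e)n\rceil \geq \pi(X_9)+1$, hence $p_k \geq p_{\pi(X_9)+1} > X_9 \geq 2S$, so $p_{k-n+1} > p_k/2 \cdot(\text{trivial lower bound}) $ — more to the point $p_k > 2S$ and $p_k > T$, which are exactly what the quadratic thresholds were designed to exploit. Assembling these: $p_{k-n+1} \ge S$ (using $k - n + 1 \ge k/2 \ge (\pi(X_9)+1)/2 \ge \pi(S)+1$, via $X_9 \ge 2S$), so the quadratic inequality defining $S$ applies and yields $2p_{k-n+1} \le p_k$; the $T$-threshold gives $\pi(p_k) - \pi(p_k/2) \ge n$; together these give $\pi(x) - \pi(x/2) \ge n$ for all $x \ge p_k$, i.e.\ $R_n \le p_k = p_{\lceil 2(1+\e)n\rceil}$.

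The main obstacle I anticipate is bookkeeping the constants so that the quadratic-in-$\log$ argument closes cleanly: one must be careful that the $1.17$ (from the upper bound for $\pi$) and the $1$ (from the lower bound) appear on the correct sides, that the ceiling in $\lceil 2(1+\e)n\rceil$ only helps (it makes $k$ larger, never smaller), and that $k - n + 1$ is bounded above correctly — a naive bound $k - n + 1 \le (2(1+\e)n) - n + 1 = (1+2\e)n + 1$ must be compared against $k/2 \ge (1+\e)n$, and the ratio $(1+2\e)/(2(1+\e)) < 1$ is the source of all the slack, so the algebra has to be steered to keep that ratio visible. Everything else is a routine monotonicity-of-$\log\log t/\log t$-type estimate of the kind already used repeatedly in Sections 3 and 4.
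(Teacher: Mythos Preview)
The paper's own proof of Lemma~\ref{lem501} is a one-line citation: ``This follows from Theorem~3.22 and Lemma~3.23 of \cite{axler2016}.'' No argument is carried out in the present paper; the constants $S$, $T$, $X_9$ are introduced here only so that the statement can be quoted in self-contained form. Your proposal is therefore not a comparison with the paper's proof at all but an attempt to reconstruct the argument of the cited reference.

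That reconstruction is in the right spirit---you correctly identify that the thresholds $S$ and $T$ arise by solving quadratics in $\log x$ coming from the two halves of \eqref{5.1}, and that the $468\,049$ is simply the validity range of the left-hand inequality in \eqref{5.1}---but there is a genuine gap. Your ``core inequality'' $2p_{k-n+1}\le p_k$ at the single index $k=\lceil 2(1+\e)n\rceil$ does not by itself yield $\pi(x)-\pi(x/2)\ge n$ for \emph{all} $x\ge p_k$; you acknowledge needing ``any $x\in[p_j,p_{j+1})$ with $j\ge k$'' but never argue why the inequality propagates to every $j\ge k$. The argument in \cite{axler2016} avoids this by working continuously: one uses \eqref{5.1} directly to bound $\pi(x)-\pi(x/2)$ from below by an explicit function of $x$, and the thresholds $S$, $T$ are designed so that this lower bound exceeds $\pi(x)/(2(1+\e))$ (equivalently, so that $\pi(x)\ge(1+\lambda)\cdot 2\pi(x/2)$ with $\lambda=\e/2$) for every $x\ge X_9$. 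Then $x\ge p_k$ gives $\pi(x)\ge k\ge 2(1+\e)n$, hence $\pi(x)-\pi(x/2)\ge n$, uniformly in $x$. Your discrete route via $2p_{k-n+1}\le p_k$ can be made to work, but only after supplying the missing for-all-$j$ step, which amounts to the same continuous estimate anyway.
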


\begin{proof}
This follows from Theorem 3.22 and Lemma 3.23 of \cite{axler2016}.
\end{proof}

The following proof of Theorem \ref{thm107} consists of three steps. In the first step, we apply Theorem \ref{thm102} and Theorem \ref{thm103} to derive a 
lower bound for the quantity $m \pi(R_n) - \pi(R_{mn})$, which holds for every positive integers $m$ and $n$ satisfying $m \geq 2$ and $n \geq \max \{ \lceil 
5225/m \rceil, 1\,245 \}$. Then, in the second step, we use this lower bound and a computer to establish Theorem \ref{thm107} for the cases $m=2$ and $m \in 
\{3, 4, \ldots, 19 \}$. Finally, we consider the case where $m \geq 20$. In this case, we first show that the inequality $\pi(R_{mn}) \leq m \pi(R_n)$ holds 
for every positive integer $n \geq 1\,245$. So it suffices to show that the required inequality also holds for every positive integers $m$ and  $n$ with $m 
\geq 20$ and $N(m) \leq n \leq 1\,244$, where $N(m)$ is defined as in Theorem \ref{thm107}, with the only exception $(m,n) = (38,9)$. For this purpose, note 
that
\begin{equation}
\pi(R_{mn}) \leq m \pi(R_n) \q \Leftrightarrow \q R_{mn} \leq p_{m\pi(R_n)}. \tag{5.2} \label{5.2}
\end{equation}
Now, for each $n \in \{ 2, \ldots, 1\,244\}$ we use \eqref{5.2} and Lemma \ref{lem501} with $\e = \pi(R_n)/2n - 1$ (note that $\e > 0$ by \eqref{1.3}) to find 
a positive integer $M(n)$, so that $R_{mn} \leq p_{m\pi(R_{n})}$ for every positive integer $m \geq M(n)$. Finally we check with a computer for which $m < M(n)$ 
the inequality $R_{mn} \leq p_{m\pi(R_{n})}$ holds.

\begin{proof}[Proof of Theorem \ref{thm107}]
First, we note that the inequality \eqref{1.10} holds for $m = 1$. So, we can assume that $m \geq 2$. Let $n$ be a positive integer with $n \geq \max \{ \lceil 
5225/m \rceil, 1\,245 \}$. By \eqref{3.4}, \eqref{3.5} and Theorem \ref{thm102}, we have
\begin{equation}
\pi(R_{mn}) < 2mn \left( 1 + \frac{\log 2 + \log 2/\log(mn) + 0.565/\log^2(mn)}{\log(mn) + \log \log(mn) - \log 2 - \log 2/\log(mn)} \right) \tag{5.3} 
\label{5.3}
\end{equation}
and, by \eqref{4.2}, \eqref{4.3} and Theorem \ref{thm103}, we have
\begin{equation}
\pi(R_n) > 2n \left( 1 + \frac{\log 2 + \log 2/\log n - (1.472\log \log n + 2.51)/\log^2 n}{\log n + \log \log n - \log 2 - \log 2/\log n} \right). \tag{5.4} 
\label{5.4}
\end{equation}
We set $\lambda(x) = \log x + \log \log x - \log 2 - \log 2/\log x$ and $\phi(x) = 1.472 \log \log x + 2.51$. Then, by \eqref{5.3} and \eqref{5.4}, we get
\begin{equation}
\frac{m\pi(R_n) - \pi(R_{mn})}{2mn} > \frac{W_m(n)}{\lambda(n)\lambda(mn)}, \tag{5.5} \label{5.5}
\end{equation}
where
\begin{align*}
W_m(n) & = \log 2 \log m + \log 2(\log \log(mn) - \log \log n) + \log 2 \left( \frac{\log(mn)}{\log n} - \frac{\log n}{\log(mn)} \right) \\
& \p{\q\q} + \log 2 \left( \frac{\log \log(mn)}{\log n} - \frac{\log \log n}{\log(mn)} \right) - \frac{\phi(n)\lambda(mn)}{\log^2 n} - 
\frac{0.565\lambda(n)}{\log^2(mn)}.
\end{align*}
Clearly, it suffices to show that $W_m(n) \geq 0$. Setting $g(x) = \log \log x$, we get, by the mean value theorem, that there exists a real number $\xi \in 
(n, mn)$ such that $g(mn) - g(n) = (m-1)ng'(\xi)$. Hence
\begin{equation}
\log \log(mn) - \log \log n = \frac{(m-1)n}{\xi \log \xi} \geq \frac{m-1}{m \log(mn)} \geq \frac{1}{2\log(mn)}. \tag{5.6} \label{5.6}
\end{equation}
Further, we have
\begin{equation}
\frac{\log(mn)}{\log n} - \frac{\log n}{\log(mn)} = \frac{\log m}{\log n} + \frac{\log m}{\log(mn)}, \tag{5.7} \label{5.7}
\end{equation}
as well as
\begin{equation}
\frac{\log \log(mn)}{\log n} - \frac{\log \log n}{\log(mn)} > \frac{\log m \log \log n}{\log^2(mn)}. \tag{5.8} \label{5.8}
\end{equation}
Combining \eqref{5.6}-\eqref{5.8} with the definition of $W_m(n)$, we obtain that the inequality
\begin{align*}
W_m(n) & > \log m \left( \log 2 + \frac{\log 2}{\log n} \right) + \log 2 \left( \frac{\log m + 1/2}{\log(mn)} + \frac{\log m \log \log 
n}{\log^2(mn)} \right) - \frac{\phi(n)\lambda(mn)}{\log^2 n} - \frac{0.565\lambda(n)}{\log^2(mn)}.
\end{align*}
Since $\lambda(x) < \log x + \log \log x - \log 2 < \log x + \log \log x$, we get
\begin{align*}
W_m(n) & > \log m \left( \log 2 + \frac{\log 2}{\log n} - \frac{\phi(n)}{\log^2 n} \right) + \log 2 \left( \frac{\log m + 1/2}{\log(mn)} + \frac{\log m \log 
\log n}{\log^2(mn)} \right) \\
& \p{\q\q} - \frac{\phi(n)}{\log n} - \frac{\phi(n)\log \log(mn)}{\log^2 n} + \frac{\phi(n) \log 2}{\log^2 n} - \frac{0.565\log n}{\log^2(mn)} - \frac{0.565 
\log \log n}{\log^2(mn)}.
\end{align*}
Now, we use the right-hand side inequality of \eqref{3.8} to get $\log \log(mn) \leq \log \log n + \log m/\log n$. Finally, we have
\begin{align}
W_m(n) & > \log m \left( \log 2 + \frac{\log 2}{\log n} - \frac{\phi(n)}{\log^2 n} - \frac{\phi(n)}{\log^3 n} \right) - \frac{\phi(n)}{\log n} - 
\frac{\phi(n)(\log \log n - \log 2)}{\log^2 n} \tag{5.9} \label{5.9} \\
& \p{\q\q} + \frac{(\log m + 1/2) \log 2 - 0.565}{\log(mn)} + \frac{(\log m \log 2 - 0.565)\log \log n}{\log^2(mn)} \nonumber
\end{align}
for every positive integers $m$ and $n$ satisfying $m \geq 2$ and $n \geq \max \{ \lceil 5225/m \rceil, 1\,245 \}$. Next, we use this inequality to prove the 
theorem. For this purpose, we consider the following three cases:
\begin{enumerate}
 \item[(i)] \textit{Case} 1: $m = 2$. \newline
First, let $n \geq 4\,903\,689$. In this case, we have $(\log m + 1/2) \log 2 - 0.565 \geq 0.262$ and $\log m \log 2 - 0.565 > - 0.085$. Hence
\begin{displaymath}
\frac{(\log m + 1/2) \log 2 - 0.565}{\log(mn)} + \frac{(\log m \log 2 - 0.565)\log \log n}{\log^2(mn)} > 0.
\end{displaymath}
Applying this inequality to \eqref{5.9}, we get
\begin{displaymath}
W_2(n) > \log 2 \left( \log 2 + \frac{\log 2}{\log n} - \frac{\phi(n)}{\log^2 n} - \frac{\phi(n)}{\log^3 n} \right) - \frac{\phi(n)}{\log n} - 
\frac{\phi(n)(\log \log n - \log 2)}{\log^2 n}.
\end{displaymath}
Since $\log 2 - \phi(x)/\log x - \phi(x)/\log^2 x > 0$ for every real $x \geq 10\,377$, we get
\begin{displaymath}
W_2(n) > \log^2 2 - \frac{\phi(n)}{\log n} - \frac{\phi(n)(\log \log n - \log 2)}{\log^2 n}.
\end{displaymath}
Note that the right-hand side of the last inequality is positive. Combined with \eqref{5.5}, we get that $\pi(R_{2n}) \leq 2\pi(R_n)$ holds for every positive 
integer $n \geq 4\,903\,689$. A direct computation shows that the inequality $\pi(R_{2n}) \leq 2\pi(R_n)$ also holds for every positive integer $n$ so that $ 
1\,245 \leq n \leq 4\,903\,689$. 
 \item[(ii)] \textit{Case} 2: $m \in \{ 3, 4, \ldots, 19 \}$. \newline
First, we consider the case where $n \geq 6\,675$. By \eqref{5.9}, we have
\begin{displaymath}
W_m(n) > \log m \left( \log 2 + \frac{\log 2}{\log n} - \frac{\phi(n)}{\log^2 n} - \frac{\phi(n)}{\log^3 n} \right) - \frac{\phi(n)}{\log n} - 
\frac{\phi(n)(\log \log n - \log 2)}{\log^2 n}.
\end{displaymath}
We set $\delta_2 = 0.003314$ to obtain that the inequality
\begin{displaymath}
\delta_2 + \frac{\log 2}{\log x} - \frac{\phi(x)}{\log^2 x} - \frac{\phi(x)}{\log^3 x} > 0
\end{displaymath}
holds for every real $x \geq 6\,675$. So we see that
\begin{displaymath}
W_m(n) > (\log 2 - \delta_2) \log 3 - \frac{\phi(n)}{\log n} - \frac{\phi(n)(\log \log n - \log 2)}{\log^2 n}
\end{displaymath}
and since the right-hand side of the last inequality is positive, we use \eqref{5.5} to conclude that $\pi(R_{mn}) \leq m\pi(R_n)$ holds for each $m \in 
\{ 3, 4, \ldots, 19 \}$ and every positive integer $n \geq 6\,675$. For $m \in \{ 3,4 \}$, we verify with a direct computation that the inequality $\pi(R_{mn}) 
\leq m\pi(R_n)$ also holds for every positive integer $n$ so that $189 \leq n \leq 6\,674$. For $m \in \{ 5,6 \}$, we use a computer to check that the 
inequality $\pi(R_{mn}) \leq m\pi(R_n)$ is also valid for every positive integer $n$ satisfying $85 \leq n \leq 6\,674$. Finally, if $m \in \{ 7, 8, 
\ldots, 19 \}$, a computer check shows that the required inequality also holds for every positive integer $n$ with $10 \leq n \leq 6\,674$.
 \item[(iii)] \textit{Case} 3: $m \geq 20$. \newline
First, let $n \geq 1\,245$. Setting $\delta_3 = 0.03$, we obtain, similar to Case 2, that
\begin{displaymath}
W_m(n) > (\log 2 - \delta_3) \log 20 - \frac{\phi(n)}{\log n} - \frac{\phi(n)(\log \log n - \log 2)}{\log^2 n}.
\end{displaymath}
Note that the right-hand side of the last inequality is positive. Together with \eqref{5.5}, we get that $\pi(R_{mn}) \leq m\pi(R_{n})$ holds for all positive 
integers $m$ and $n$ satisfying $m \geq 20$ and $n \geq 1\,245$. Now, for each $n \in \{ 2, \ldots, 1\,244 \}$, we use \eqref{5.2}, Lemma \ref{lem501} with $\e 
= \pi(R_n)/2n - 1$ and a C++ version of the following MAPLE code to find positive integer $M(n) \geq 20$, so that $R_{mn} \leq p_{m\pi(R_{n})}$ for every 
positive integer $m \geq M(n)$ and then we check for which $m$ with $20 \leq m < M(n)$ the inequality $R_{mn} \leq p_{m\pi(R_{n})}$ holds:
\begin{align*}
& > \texttt{restart: with(numtheory): Digits := 100:} \vspace{4mm} \\
& > \texttt{for n from 1244 by -1 to 2 do} \\
& \p{>>>} \texttt{ep := pi(R[n])/(2*n)-1: \# R[n] denotes the nth Ramanujan prime} \\
& \p{>>>} \texttt{lambda := ep/2:} \\
& \p{>>>} \texttt{S := ceil(evalf(exp(sqrt(1.17+2*(1+ep)/ep*(0.17+log(2)/log(2*5.43))+} \\
& \p{>>>>>>} \texttt{(1/2+(1+ep)*log(2)/ep)$\widehat{\p{a}}$2)+1/2+(1+ep)*log(2)/ep):} \\
& \p{>>>} \texttt{T := ceil(evalf(exp(sqrt(1.17+0.17/lambda+1/4)+1/2))):} \\
& \p{>>>} \texttt{X9 := max(468049,2*S,T): M := ceil((1+pi(X9))/(2*(1+ep))):} \\
& \p{>>>} \texttt{\# Hence pi(R[mn]) <= m*pi(R[n]) for all m >= M by Lemma 5.1} \\
& \p{>>>} \texttt{while M*pi(R[n]) - pi(R[n*M]) >= 0 and M >= 20 do} \\
& \p{>>>>>} \texttt{M := M-1:} \\
& \p{>>>} \texttt{end do:} \\
& \p{>>>} \texttt{L[n] := M+1:} \\
& \p{>} \texttt{ end do:}
\end{align*}
Since $L[i] = 20$ for every $i \in \{2, \ldots, 1244 \} \setminus \{ 9 \}$ and $L[9] = 39$, we get that $\pi(R_{mn}) \leq m\pi(R_n)$ for every positive 
integers $n, m$ with $n \in \{2, \ldots, 1244 \} \setminus \{ 9\}$ and $m \geq 20$ and for every positive integers $n,m$ with $n = 9$ and $m \geq 39$. A direct 
computation shows that the inequality $\pi(R_{9m}) \leq m\pi(R_9)$ holds for every $m$ with $20 \leq m \leq 37$ as well and that $38\pi(R_9) - \pi(R_{9 \cdot 
38}) = -2$.
\end{enumerate}
So, we showed that the inequality $\pi(R_{mn}) \leq m\pi(R_n)$ holds for every $m \in \N$ and every positive integer $n \geq N(m)$ with the only exception 
$(m,n) = (38,9)$, as desired.
\end{proof}

%

%

We use Theorem \ref{thm107} and a computer to get the following remark.

\begin{rema}
The inequality $\pi(R_{mn}) \leq m\pi(R_n)$ fails if and only if $(m,n) \in \N_{\geq 2} \times \{ 1 \}$ (see \eqref{1.3}) or
\begin{align*}
(m,n) & \in \{ (2,3), (2,7), (2,8), (2,9), (2,22), (2,23), (2,25), (2,37), (2,38), (2,49), (2,53), (2,54), \\
& \p{ \in \{ , } (2,55), (2,66), (2,82), (2,83), (2,84), (2,85), (2,86), (2,87), (2,101), (2,102), (2,113), \\ 
& \p{ \in \{ , } (2,114), (2,115), (2,160), (2,161), (2,162), (2,179), (2,180), (2,184), (2,185), (2,186), \\
& \p{ \in \{ , } (2,232), (2,240), (2,241), (2,246), (2,247), (2,376), (2,377), (2,378), (2,379), (2,380), \\
& \p{ \in \{ , } (2,381), (2,386), (2,387), (2,388), (2,412), (2,531), (2,532), (2,537), (2,538), (2,547), \\
& \p{ \in \{ , } (2,548), (2,549), (2,550), (2,551), (2,552), (2,553), (2,554), (2,555), (2,556), (2,557), \\
& \p{ \in \{ , } (2,558), (2,792), (2,793), (2,794), (2,795), (2,796), (2,797), (2,798), (2,799), (2,800), \\
& \p{ \in \{ , } (2,801), (2,802), (2,803), (2,804), (2,1140), (2,1141), (2,1142), (2,1146), (2,1147), \\
& \p{ \in \{ , } (2,1202), (2,1241), (2,1242), (2,1243), (2,1244), (3,9), (3,11), (3,23), (3,25), (3,49), \\
& \p{ \in \{ , } (3,54), (3,55), (3,56), (3,57), (3,66), (3,67), (3,83), (3,84), (3,114), (3,115), (3,160), \\
& \p{ \in \{ , } (3,187), (3,188), (4,9), (4,11), (4,37), (4,38), (4,42), (4,54), (4,55), (4,82), (4,83), \\
& \p{ \in \{ , } (4,84), (4,114), (4,115), (4,188), (5,3), (5,9), (5,84), (6,28), (6,54), (6,55), (6,84), \\
& \p{ \in \{ , } (7,3), (7,9), (8,9), (9,9), (10,9), (11,3), (11,9), (12,9), (13,9), (14,9), (15,3), (15,9), \\
& \p{ \in \{ , } (16,9), (17,9), (18,9), (19,9), (38,9) \}.
\end{align*}
\end{rema}

%


\begin{thebibliography}{10}
\bibitem{axler2016} C. Axler, \textit{On generalized Ramanujan primes}, Ramanujan J. \textbf{39} (2016), no. 1, 1--30.
\bibitem{axler20162} C. Axler, \textit{New bounds for the prime counting function}, Integers \textbf{16} (2016), Paper No. A22, 15 pp.
\bibitem{axler2017} C. Axler, \textit{New estimates for the $n$-th prime number}, preprint, 2017. Available at \url{arxiv.org/1706.03651}.
\bibitem{chebyshev} P. Chebychev, \textit{M\'emoire sur les nombres premiers}, M\'emoires des savants \'etrangers de l'Acad. Sci. St.P\'etersbourg \textbf{7} 
(1850), 17--33 [Also, Journal de math\'ematiques pures et appliques \textbf{17} (1852), 366--390].
\bibitem{dusart2010} P. Dusart, \emph{Estimates of some functions over primes without R.H.}, preprint, 2010. Available at \url{arxiv.org/1002.0442}.
\bibitem{hadamard1896} J. Hadamard, \emph{Sur la distribution des z\'{e}ros de la fonction $\zeta(s)$ et ses cons\'{e}quences arithm\'{e}tiques}, Bull. Soc. 
Math. France \textbf{24} (1896), 199--220. 
\bibitem{laishram} S. Laishram, \textit{On a conjecture on Ramanujan primes}, Int. J. Number Theory \textbf{6} (2010), no. 8, 1869--1873.
\bibitem{ramanujan} S. Ramanujan, \textit{A proof of Bertrand's postulate}, J. Indian Math. Soc. \textbf{11} (1919), 181--182.
\bibitem{sloane} N. J. A. Sloane, Sequence A179196, The on-line encyclopedia of integer sequences, \url{oeis.org/A179196}.
\bibitem{sondow} J. Sondow, \textit{Ramanujan primes and Bertrand's postulate}, Amer. Math. Monthly \textbf{116} (2009), no. 7, 630--635.
\bibitem{sondownicholsonnoe} J. Sondow, J. W. Nicholson and T. D. Noe, \textit{Ramanujan primes: bounds, runs, twins, and gaps}, J. Integer Seq. \textbf{14} 
(2011), no. 6, Article 11.6.2, 11 pp.
\bibitem{srinivasan} A. Srinivasan, \textit{An upper bound for Ramanujan primes}, Integers \textbf{14} (2014), Paper No. A19, 3 pp.
\bibitem{srinivasanares} A. Srinivasan and P. Ar\'es, \textit{New upper bounds for Ramanujan primes}, preprint, 2017. Available at \url{arxiv.org/1706.07241}.
\bibitem{srinivasannicholson} A. Srinivasan and J. W. Nicholson, \textit{An improved upper bound for Ramanujan primes}, Integers \textbf{15} (2015), Paper No. 
A52, 6 pp.
\bibitem{vallee1896} C.-J. de la Vall\'{e}e Poussin, \textit{Recherches analytiques la th\'{e}orie des nombres premiers}, Ann. Soc. scient. Bruxelles 
\textbf{20} (1896), 183--256.
\bibitem{vallee1899} C.-J. de la Vall\'{e}e Poussin, \textit{Sur la fonction $\zeta(s)$ de Riemann et le nombre des nombres premiers inf\'{e}rieurs \`{a} une
limite donn\'{e}e}, Mem. Couronn\'{e}s de l'Acad. Roy. Sci. Bruxelles \textbf{59} (1899), 1--74.
\bibitem{yangtogbe} S. Yang and A. Togb\'e, \textit{On the estimates of the upper and lower bounds of Ramanujan primes}, Ramanujan J. \textbf{40} (2016), no. 
2, 245--255.
\end{thebibliography}
\end{document}